\newtheorem{theorem}{\bf Theorem}[section]
\newtheorem{cor}[theorem]{\bf Corollary}
\newtheorem{conjecture}{\bf Conjecture}
\newtheorem{problem}[theorem]{Problem}
\begin{document}

		\title{\Large On the total versions of 1-2-3-conjecture for graphs and hypergraphs}
		\author{\small \hspace*{-1cm} Akbar Davoodi$^1$\footnote{The research of the first author was supported by the Czech Science Foundation, grant number GA19-08740S.}
		 \ and \  Leila Maherani$^2$\footnote{The research of the second author was supported by a grant from IPM.}\\
		{\footnotesize \hspace*{-1cm}$^{1}$The Czech Academy of Sciences, Institute of Computer Science, Pod Vod\'{a}renskou v\v{e}\v{z}\'{\i} 2, 182 07 Prague, Czech Republic}\\
		{\footnotesize \hspace*{-1cm}$^{2}$School of Mathematics, Institute for Research in Fundamental Sciences (IPM), P.O.Box: 19395-5746, Tehran, Iran}\\	
		{\footnotesize \hspace*{-1cm}{\bf E-mails:} davoodi@cs.cas.cz, maherani@ipm.ir}}
	\date {}
	\maketitle
	
\begin{abstract}
{\footnotesize
In 2004, Karo\'nski, \L uczak and Thomason proposed $1$-$2$-$3$-conjecture: For every nice graph $G$  there is an edge weighting function $ w:E(G)\rightarrow\{1,2,3\} $ such that the induced vertex coloring is proper. 	
After that, the total versions of this conjecture were suggested in the literature and recently,  Kalkowski et al. have generalized this conjecture  to  hypergraphs. In this paper, some previously known results on the total versions are improved. Moreover, an affirmative answer is given to the conjecture  for some well-known families of hypergraphs like complete $n$-partite hypergraphs,  paths, cycles, theta hypergraphs and some geometric planes.  Also, these hypergraphs are characterized based on the corresponding parameter.}
\end{abstract}
MSC 2010: 05c15; 05c78; 05c65\\
Keywords: vertex coloring, edge weighting, hypergraphs

\section{Introduction}

Through out the paper,  we consider  simple and undirected graphs. For a graph $G$, the notations $V(G)$ and $E(G)$ stand for the vertex set and the edge set of  $G$, respectively.  A graph is called \textit{nice}, if it contains no component isomorphic to $ K_2 $.
Let $ G $ be a nice graph and $ w:E(G)\rightarrow\{1,2,\ldots, k\} $ be an integer edge weighting function of $G$.
Karo\'nski,  \L uczak and Thomason introduced a vertex coloring of $ G $ obtained from $w$ as follows \cite{KLT}.
For every $ v\in V(G) $, let $ \sigma^{e}(v) $ denotes the  sum of the  weights of edges incident on $ v $, i.e.
\begin{equation*}
\sigma^{e}(v) :=\sum_{e\ni v}w(e).
\end{equation*}
Whenever  the induced vertex coloring is \textit{proper}, i.e. for every two adjacent vertices $ u $ and $ v $, $ \sigma^{e}(u)\neq\sigma^{e}(v), $ the corresponding edge weighting is called \textit{neighbor sum distinguishing}.
The minimum integer $ k $ for which $ G $ admits a neighbor sum distinguishing is denoted by $ \chi^{e}(G) $.
In $ 2008 $, Karo\'nski et al. proposed the following statement  known as \textit{1-2-3-conjecture}.
\begin{conjecture}\label{conj:123}{\em\cite{KLT}}
	For every nice graph $ G $, $  \chi^{e}(G)\leq 3 $.
\end{conjecture}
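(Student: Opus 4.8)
First I would observe that it suffices to treat each connected component of $G$ separately, since the coloring constraints only involve adjacent vertices: an isolated vertex imposes no constraint, and because $G$ is nice no component is a $K_2$, so we may assume $G$ is connected on at least three vertices. With this reduction in hand there are two natural lines of attack, and my plan is to try the algebraic one first and fall back on a constructive one when it stalls.

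The algebraic route introduces a variable $x_e$ for each edge $e$ and forms the polynomial
\begin{equation*}
P(\mathbf{x}) = \prod_{uv \in E(G)} \bigl( \sigma^{e}(u) - \sigma^{e}(v) \bigr),
\end{equation*}
where now $\sigma^{e}(v) = \sum_{e \ni v} x_e$ is read as a linear form in the $x_e$. A weighting is neighbor sum distinguishing exactly when $P$ does not vanish at it. The idea is to exhibit a monomial of $P$ whose exponents are all at most $2$ and whose coefficient is nonzero, and then to invoke the Combinatorial Nullstellensatz to force a non-vanishing assignment from the set $\{1,2,3\}$. This does settle the conjecture for structured classes, such as $3$-colorable graphs, where the top-degree part of $P$ can be pinned down; but in general the degree and the cancellations in $P$ are not controlled tightly enough, so this alone will not reach the bound $3$.

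When the algebraic approach stalls I would switch to a greedy, vertex-by-vertex construction built around an interval invariant. Fix an ordering $v_1, \dots, v_n$ of the vertices and process them in turn. On reaching $v_i$, I would treat the still-unassigned weights on the edges joining $v_i$ to its \emph{later} neighbors as free parameters, and choose them so that $\sigma^{e}(v_i)$ lands on a value different from the already-fixed sums of its \emph{earlier} neighbors. The heart of the argument is an invariant guaranteeing that, at each step, the set of sums still reachable for $v_i$ is strictly larger than the set of forbidden values it must dodge, so a legal choice always exists while enough flexibility is left for the vertices handled afterward. Careful bookkeeping of these reachable windows yields a neighbor sum distinguishing weighting from a constant-size set, and with a sharpened invariant one can keep all weights in $\{1,2,3,4,5\}$.

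The main obstacle is the last mile: both methods comfortably deliver an absolute constant, but pushing that constant all the way down to exactly $3$ requires exploiting the global structure of $G$ far more tightly than either the monomial count or the local windows do. I expect this final step to demand either a substantially sharper potential/interval argument or a genuinely new idea, and it is precisely this gap that keeps the statement a conjecture rather than a theorem.
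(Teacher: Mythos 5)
There is nothing to compare here, because the paper contains no proof of this statement: it is Conjecture \ref{conj:123} itself, quoted from \cite{KLT} as an open problem that motivates the paper's results. Accordingly, your proposal cannot match or diverge from a paper argument --- and, more importantly, it is not a proof. You candidly say so yourself in your final paragraph, and that concession is exactly right. What you sketch are the two standard partial approaches from the literature: the polynomial method (Combinatorial Nullstellensatz applied to $P(\mathbf{x}) = \prod_{uv \in E(G)} \bigl( \sigma^{e}(u) - \sigma^{e}(v) \bigr)$), which as in \cite{KLT} settles the case of $3$-colorable graphs but whose monomial coefficients are not controllable in general; and the Kalkowski-style greedy scheme with interval invariants, which is precisely the technique of \cite{KKP} and yields $\chi^{e}(G) \leq 5$ for every nice graph $G$. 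Neither is pushed further in your write-up, and no new idea is supplied to bridge the gap from $5$ to $3$.

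The genuine gap is therefore the entire last step: exhibiting, for an arbitrary nice graph, a weighting from $\{1,2,3\}$ whose induced vertex coloring is proper. Your interval-invariant argument inherently needs a window of five values (two choices of adjustment per edge, combined with a parity trick, is what caps \cite{KKP} at $5$), and the Nullstellensatz route fails because the relevant coefficient of $P$ can vanish for graphs of high chromatic number. Since the paper itself treats the statement as unproved and only verifies related total and hypergraph versions for special families, the honest conclusion is that your proposal correctly surveys why the conjecture was believed and where the known methods stop, but it does not establish the statement.
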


This conjecture  has been explored in two ways. The first is to improve general upper bounds on $ \chi^{e}(G) $. The latter is to confirm it for some well-known families of graphs.

The chromatic number of a graph is the smallest number of colors required to give a proper vertex coloring for the graph.
A graph is called $ k $-colorable, if its chromatic number is at most $ k $.
In \cite{KLT}, it is  shown that if $G$ is  $ k $-colorable,
where $k$ is odd, then $ \chi^{e}(G) \leq k$. Consequently, the conjecture is true for
$3$-colorable graphs. In the literature, the first  constant bound was obtained by
Addario-Berry et al. \cite{A2} who showed that $ \chi^{e}(G) \leq 30$ for every nice graph $ G $. This result was improved to $ \chi^{e}(G) \leq 16$ by Addario-Berry, Dalal, and Reed \cite{A1} and then to  $ \chi^{e}(G) \leq 13$ by Wang
and Yu \cite{WY}. Currently, the best upper bound is $5$ due to  Kalkowski,  Karo\'nski and  Pfender  \cite{KKP}.

Parameter $  \chi^{e}$ has been studied for several classes of graphs
including paths, cycles,  complete graphs,  bipartite graphs, theta graphs and so on \cite{CH, DO, LYZ}.  However,   few number of graph classes are characterized  by the invariant $  \chi^{e}$.
To see these results in details we refer the reader to the survey written by Seamone \cite{survey}.

The cycle of length six and the complete graph on three vertices are simple instances of nice graphs for which weights 1 and 2 do not
suffice to give a neighbor sum distinguishing.
Hence, the bound claimed in Conjecture \ref{conj:123} is best possible. On the other hand, the following theorem gives  some intuitions for the correctness of Conjecture \ref{conj:123}.\\
The random graph $G_{n, p}$ is the graph on $n$ vertices such that there is an edge between any two vertices   randomly and independently, with
probability $p$.
\begin{theorem}{\em\cite{A1}}\label{a.all}
	Assume that $G=G_{n,p}$ is a random graph for any constant $p \in (0,1)$.
	Asymptotically almost surely, $  \chi^{e}(G) \leq 2$.
\end{theorem}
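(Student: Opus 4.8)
The plan is to recast the existence of a neighbor sum distinguishing weighting with weights in $\{1,2\}$ as a degree-constrained subgraph problem. If $H$ denotes the set of edges receiving weight $2$ and every other edge gets weight $1$, then $w(e)=1+\mathbf{1}[e\in H]$, so that $\sigma^{e}(v)=\deg_G(v)+\deg_H(v)$ for each vertex $v$. Thus $\chi^{e}(G)\le 2$ is equivalent to the existence of a spanning subgraph $H\subseteq G$ for which $v\mapsto \deg_G(v)+\deg_H(v)$ is a proper coloring of $G$, and the whole argument is devoted to producing such an $H$ a.a.s.\ when $G=G_{n,p}$.

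First I would record the a.a.s.\ structural facts that the construction needs. For constant $p$, standard concentration gives that a.a.s.\ every degree lies in a window of width $O(\sqrt{n\log n})$ about $np$; in particular $\delta(G)\ge np-O(\sqrt{n\log n})$ and $\Delta(G)\le np+O(\sqrt{n\log n})$. I set $R:=[\Delta(G),\,2\delta(G)]$, which is then an interval of length $2\delta(G)-\Delta(G)=(1+o(1))np=\Theta(n)$, and for \emph{every} vertex $v$ one has $R\subseteq[\deg_G(v),\,2\deg_G(v)]$, since $\deg_G(v)\le\Delta(G)$ and $\deg_G(v)\ge\delta(G)$. Hence the prescribed sum of every vertex can be steered into the single common interval $R$, with no exceptional low- or high-degree vertices to treat separately. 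I would also use the classical estimate that a.a.s.\ $\chi(G)=(1+o(1))\,n/(2\log_b n)=\Theta(n/\log n)$ with $b=1/(1-p)$, and fix a proper coloring $c\colon V(G)\to\{1,\dots,q\}$ with $q=\chi(G)$ classes.

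The coupling step turns this coloring into disjoint target windows for the sums. Partition $R$ into $q$ consecutive subintervals $I_1,\dots,I_q$, each of length $|R|/q=\Theta(n)/\Theta(n/\log n)=\Theta(\log n)$, and assign to each $v$ the window $I_{c(v)}$. Adjacent vertices get different colors and hence disjoint windows, so any $H$ with $\sigma^{e}(v)=\deg_G(v)+\deg_H(v)\in I_{c(v)}$ for all $v$ is automatically neighbor sum distinguishing. Writing $J_v:=I_{c(v)}-\deg_G(v)$, this is precisely the demand that $\deg_H(v)\in J_v$, where each $J_v\subseteq[0,\deg_G(v)]$ is an interval of width $\Theta(\log n)$. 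I would realize such an $H$ by invoking the degree-constrained subgraph machinery of Addario-Berry, Dalal and Reed \cite{A1}: their alternating-path technique builds a subgraph meeting prescribed vertex degrees up to a constant additive error, and a target window of width $\Theta(\log n)$ is vastly more slack than that tolerance, so the hypotheses are comfortably satisfied.

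The step I expect to be the genuine obstacle is exactly this exact-degree control. A purely random $H$ is useless: if each edge were retained independently, $\deg_H(v)$ would fluctuate on the scale $\sqrt{\deg_G(v)}=\Theta(\sqrt n)$, which dwarfs the window width $\Theta(\log n)$, so no concentration argument can pin $\deg_H(v)$ inside $I_{c(v)}$; and a union bound is hopeless too, since the expected number of conflicting edges is $\Theta(n^2)\cdot\Theta(1/\sqrt{np})\to\infty$ and the dependency degree is too large for the Local Lemma. This is why the deterministic subgraph lemma, which repairs one vertex's degree along an augmenting path without disturbing the others, is indispensable, and the technical heart of the proof is checking its feasibility hypotheses: the global parity (handshake) condition, which a window of width $\ge 1$ lets one absorb, and the expansion needed to route the augmenting paths, both of which follow a.a.s.\ from the quasirandomness of $G_{n,p}$. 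Once $H$ is secured, setting $w(e)=2$ on $H$ and $w(e)=1$ elsewhere places every sum in its assigned window and yields $\chi^{e}(G)\le 2$.
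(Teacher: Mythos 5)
The paper gives no proof of this statement at all --- Theorem \ref{a.all} is quoted from \cite{A1} --- so your attempt has to be judged against the argument of Addario-Berry, Dalal and Reed. Your skeleton is in fact the same as theirs: encode a $\{1,2\}$-weighting by the subgraph $H$ of weight-$2$ edges, so that $\sigma^{e}(v)=\deg_G(v)+\deg_H(v)$; use a.a.s.\ degree concentration and a proper coloring of $G_{n,p}$ to hand adjacent vertices disjoint target windows; then realize the windows by a degree-constrained subgraph. The gap sits exactly at the step you yourself call the heart of the proof: the lemma you invoke does not exist in the form you use it. The degree-constrained subgraph theorem of \cite{A1} has the hypothesis $a_v^-\le \deg_G(v)/2\le a_v^+$ for \emph{every} vertex, and its conclusion is only that $\deg_H(v)\in\{a_v^-,a_v^-+1,a_v^+,a_v^++1\}$, with no control over which of the four values occurs; it is not a device for meeting arbitrarily prescribed degrees up to a constant additive error. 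Nor can any such device exist: in $K_n$, prescribing degree $\approx n$ on half of the vertices and $\approx 0$ on the other half is infeasible for any constant tolerance, by counting the edges between the two halves --- an obstruction of counting type, which ``window width at least $1$ absorbs parity'' does not touch. Your windows violate the needed hypothesis massively: a vertex assigned to $I_1$ must have $\deg_H(v)=O(\sqrt{n\log n})$, while one assigned to $I_q$ must have $\deg_H(v)\ge \deg_G(v)-O(\sqrt{n\log n})$, both far from $\deg_G(v)/2\approx np/2$.

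One could instead try to produce your $H$ from the general Lov\'asz $(g,f)$-factor theorem, but then its cut conditions must be verified, and for your windows they are tight at leading order: taking $S$ to be the vertices assigned to the bottom $\epsilon$-fraction of $R$ and $T$ those assigned to the top $\epsilon'$-fraction, the slack in the condition comes out to $\tfrac12(\epsilon-\epsilon')^2n^2p$ plus lower-order terms, which vanishes when $\epsilon=\epsilon'$; feasibility then hinges on the competition between the degree spread $\Delta-\delta$ and the Chernoff fluctuations of $e_G(v,S)$, i.e.\ on constants, not on any $\Theta(\log n)$-versus-$O(1)$ slack. So ``comfortably satisfied'' is unjustified, and this is a genuine gap rather than a routine verification. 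The repair is to do what \cite{A1} actually does: keep the proper coloring, but use it to assign each vertex a low target pair $\{\ell_v,\ell_v+1\}$ inside the common interval $[\Delta,3\delta/2]$ and a high target pair $\{h_v,h_v+1\}$ inside $[3\Delta/2,2\delta]$ (each of length $(1+o(1))np/2$, which a.a.s.\ exceeds $2\chi(G_{n,p})=O(n/\log n)$), so that every vertex's window straddles $\deg_G(v)/2$. Then the four-valued conclusion of the theorem applies verbatim, adjacent vertices have pairwise disjoint sets of possible sums by construction, and any outcome is a proper sum-coloring, giving $\chi^{e}(G_{n,p})\le 2$.
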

Clearly, $ \chi^{e}(G) =1$, if and only if $G$ contains no adjacent vertices with the same degree. Therefore,
determining the exact value of $  \chi^{e}$ seems to be a more difficult problem  for graphs with more regular structures. This observation  motivates researchers  to focus on some regular graphs and hypergraphs.

In the sequel, we present  definitions of  parameters $ \chi^{ve}$ and $ \chi^{ven}$ which are considered as the total versions of $ \chi^{e} $, introduced  in \cite{PW} and \cite{B}, respectively.
Let $w:V(G)\cup E(G)\rightarrow\{1,2,\ldots, k\} $ be a total integer weighting function of $G$. For every vertex $v$, $ \sigma^{ve}(v) $ is defined by
\begin{equation*}
\sigma^{ve}(v) :=w(v)+ \sum_{e\ni v}w(e)= w(v)+\sigma^{e}(v).
\end{equation*}
If $ \sigma^{ve}$ is a proper vertex coloring, then  $w$ is called  a \textit{neighbor sum distinguishing total coloring} of $G$. The minimum integer $k$ for which there exists a neighbor
sum distinguishing total coloring is denoted by $ \chi^{ve}(G) $.
As a total version of 1-2-3-conjecture, the following statement was suggested by Przyby\l o and Wo\'{z}niak in \cite{PW}.

\begin{conjecture}\label{ve conj}{\em\cite{PW}}
	For every  graph $ G $, $  \chi^{ve}(G)\leq 2 $.
\end{conjecture}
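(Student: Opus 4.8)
The statement to be established, $\chi^{ve}(G)\le 2$, is the \emph{total $1$-$2$ conjecture}; it is considerably more delicate than the partial results quoted above, so I will outline the line of attack I would pursue rather than claim a complete argument. A first, harmless reduction is to assume $G$ is connected, since vertices in distinct components are never adjacent and each component may be weighted independently. Note also that, in contrast to Conjecture~\ref{conj:123}, no niceness hypothesis is needed: already for $K_2=uv$ one sets $w(u)=1$, $w(v)=2$ and $w(uv)$ arbitrarily, whence $\sigma^{ve}(u)\ne\sigma^{ve}(v)$. This illustrates the guiding principle of the whole problem: the vertex weights supply one extra degree of freedom per vertex beyond the edge weighting of the original $1$-$2$-$3$ setting, and the task is to spend that freedom so as to break all ties simultaneously.

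The main engine I would use is a Kalkowski-style greedy sweep, adapted from \cite{KKP} to the total setting. Fix an ordering $v_1,\dots,v_n$ of $V(G)$ and process the vertices in turn; when $v_i$ is reached I fix its vertex weight $w(v_i)\in\{1,2\}$ together with the weights of the edges joining $v_i$ to \emph{later} vertices, leaving the already-fixed back-edges untouched. The invariant to maintain is that, after $v_i$ has been treated, its partial sum can still be nudged into one of two consecutive target values by later choices, while $\sigma^{ve}(v_i)$ already differs from the colours of all finalised neighbours $v_j$ with $j<i$. The two admissible vertex weights, together with the two admissible weights on each forward edge, should keep a two-element window of reachable colours alive at every step, with the last vertex coloured by the single remaining free choice.

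Before attempting the general sweep I would settle the conjecture on the structured families where the extra vertex freedom is easiest to exploit: paths and cycles (a direct parity computation around the cycle), trees (correcting leaves last), and bipartite graphs (biasing the two sides in opposite directions through the vertex weights and then absorbing residual collisions with the edges). Complete and complete multipartite graphs are worth treating explicitly as well, since their high regularity is exactly where $\chi^{e}$ is hardest, and—being the obstructions for the edge-only conjecture—they are the natural sanity checks both for the method and for any proposed pattern.

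The hard part, and the reason the conjecture remains open in general, is the extreme tightness of the two-value palette. With weights drawn only from $\{1,2\}$, moving a single element changes a vertex sum by exactly $1$, so a greedy window that must avoid the colours of up to $\deg(v)$ neighbours has essentially no room to grow as the degree increases; unlike the edge-only problem, where enlarging the palette to $\{1,\dots,5\}$ buys the slack that makes \cite{KKP} work, here the palette is fixed at size two and cannot be widened. Consequently the delicate point is to design the ordering and update rule so that the two-element window provably survives every step even at high-degree vertices—equivalently, to show that the accumulated parity and magnitude constraints never force a collision. I expect this to require a global invariant, for instance a controlled bound on how the reachable windows of unprocessed neighbours overlap, rather than a purely local greedy guarantee, and it is precisely the absence of such an invariant in full generality that keeps the statement a conjecture.
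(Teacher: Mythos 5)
The statement you were given is Conjecture~\ref{ve conj} --- the $1$-$2$ conjecture of Przyby\l o and Wo\'{z}niak \cite{PW} --- and the paper does \emph{not} prove it: it is quoted as an open conjecture, with the only general result in its direction being Kalkowski's theorem \cite{K} that $\chi^{ve}(G)\le 3$ for every graph $G$. So there is no proof in the paper to compare yours against, and, to your credit, your submission correctly recognizes this. What you wrote is not a proof but a strategy outline: a Kalkowski-style greedy sweep \cite{KKP} adapted to the total setting, preceded by verification on structured families, together with an accurate diagnosis of why the argument does not close --- with weights restricted to $\{1,2\}$ the two-element ``window'' of reachable sums has no slack to survive high-degree vertices, which is precisely the gap between the proved bound $3$ and the conjectured bound $2$.

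Judged as a proof, then, your attempt has a gap by its own admission (the global invariant you say would be needed is never constructed, and no one knows how to construct it); judged as a response to an open problem, it is the correct and honest answer. The only improvement I would suggest is to state up front, in one sentence, that the claim is an open conjecture and that what follows is a program rather than an argument --- as written, a reader must reach the final paragraph to learn that nothing has been established. Your small observations along the way are sound: the $K_2$ example does show why no niceness hypothesis is needed for $\chi^{ve}$, and component-wise reduction is legitimate.
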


Kalkowski in \cite{K} showed that $  \chi^{ve}(G)\leq 3 $ for every graph $G$. Note that, $ G $ is not necessarily a nice graph in Conjecture \ref{ve conj}.

Let $w:V(G)\cup E(G)\rightarrow\{1,2,\ldots, k\} $ be a total integer weighting function of $G$. For every vertex $v \in V(G)$,
\begin{equation*}
\sigma^{ven}(v) :=w(v)+ \sum_{e\ni v}w(e)+ \sum_{u\in N(v)}w(u)=\sigma^{ve}(v)+\sum_{u\in N(v)} w(u),
\end{equation*}
where $N(v)$ denotes the open neighborhood of $v$. The function $w$ is  a \textit{neighbor full sum distinguishing
	total k-coloring} of $ G, $ whenever $\sigma^{ven}$ is a proper vertex coloring. The minimum integer $k$ for which there exists
such a coloring is denoted by $  \chi^{ven}(G)$.
Recently, Baudon et al. have introduced this notation  and  have proposed  the following conjecture \cite{B}.
\begin{conjecture}\label{ven conj}{\em\cite{B}}
	For every  nice graph $ G $, $  \chi^{ven}(G)\leq 3 $.
\end{conjecture}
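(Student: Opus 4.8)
My plan is to recast the problem additively. Fix the baseline weighting assigning $1$ to every vertex and every edge, and write each admissible weight as $1$ plus an increment in $\{0,1,2\}$; call these $a_v$ for a vertex $v$ and $b_e$ for an edge $e$. A short computation shows how each increment moves the colors: raising $a_v$ by one adds $1$ to $\sigma^{ven}(x)$ for every $x$ in the closed neighborhood $N[v]=\{v\}\cup N(v)$, while raising $b_e$ by one, for $e=xy$, adds $1$ to both $\sigma^{ven}(x)$ and $\sigma^{ven}(y)$. Writing $\sigma_0$ for the color vector of the all-$1$ weighting,
\[
\sigma^{ven}=\sigma_0+\sum_{v\in V(G)}a_v\,\mathbf{1}_{N[v]}+\sum_{e\in E(G)}b_e\,\mathbf{1}_{e},
\]
and the task becomes choosing the increments so that adjacent coordinates differ.

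The most revealing feature of this picture appears in the difference across an edge $uv$, where the own-weights of $u$ and $v$ cancel, leaving
\[
\sigma^{ven}(v)-\sigma^{ven}(u)=\Big(\sum_{x\in N(v)\setminus\{u\}}w(x)-\sum_{x\in N(u)\setminus\{v\}}w(x)\Big)+\Big(\sum_{e\ni v}w(e)-\sum_{e\ni u}w(e)\Big).
\]
In particular $w(u)$ and $w(v)$ never help separate $u$ from $v$ directly. This already explains the \emph{nice} hypothesis: for $K_2$ the right-hand side is identically $0$, so no weighting can work. The plan, then, is to exploit the two genuinely useful moves, namely that an edge increment perturbs exactly its two endpoints while a vertex increment perturbs a whole closed neighborhood, and to drive $\sigma^{ven}$ to a proper coloring.

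Concretely I would attempt a Kalkowski-type sequential argument, the method behind $\chi^{ve}(G)\le 3$ and the bound $\chi^e(G)\le 5$. Order the vertices $v_1,\dots,v_n$ and process them one at a time, maintaining the invariant that each vertex still has an interval of at least two colors to which it can be pushed using its not-yet-committed incident edges and its vertex increment; when $v_i$ is reached, commit its weights so that $\sigma^{ven}(v_i)$ avoids the colors already frozen on its earlier neighbors, keeping enough unused increments on edges toward later vertices to preserve the invariant. For the structured families treated in this paper the regularity or sparsity makes this bookkeeping explicit, and in the most symmetric cases one can instead certify a suitable weighting through the Combinatorial Nullstellensatz applied to the product $\prod_{uv\in E}\big(\sigma^{ven}(v)-\sigma^{ven}(u)\big)$.

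The hard part, and the reason the general statement remains a conjecture, is precisely the coupling that the additive picture lays bare. In the solved parameter $\chi^{ve}$ the vertex weight is a \emph{private} knob: changing $w(v)$ moves only $\sigma^{ve}(v)$, so a vertex can always be recolored in $\{0,1,2\}$ steps without disturbing anyone, which is exactly what lets the induction close. Here the same knob adds to the entire closed neighborhood $N[v]$, so committing $w(v)$ to fix $v_i$ can spoil colors already set on its processed neighbors. Thus $\chi^{ven}$ sits strictly between a solved case and an open one: it offers more freedom than the edge-only parameter $\chi^e$, where even the bound $3$ is open and the best general result is $5$, yet its vertex moves are less flexible than those governing $\chi^{ve}$. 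Controlling this coupling uniformly over all nice graphs is the central obstacle, and it is what any proof of the full conjecture must overcome.
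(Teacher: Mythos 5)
The statement you were asked to prove is Conjecture~\ref{ven conj}, an open conjecture of Baudon et al.\ that this paper only \emph{cites}: the paper contains no proof of it, and none exists in the literature. The known results are $\chi^{ven}(G)\le 5$ in general (proved in \cite{B} by adapting the method of \cite{KKP}) and verification of the conjecture for special families --- paths, cycles, bipartite, complete and split graphs in \cite{B}, plus complete $3$-partite graphs in this paper's Theorem~\ref{g2}. Your write-up is correctly calibrated to this reality. The preliminary analysis is sound: raising $w(v)$ by one does shift $\sigma^{ven}$ by one on all of $N[v]$, raising $w(e)$ shifts it on the two endpoints of $e$, and your cancellation identity across an edge $uv$ (the own-weights $w(u)$, $w(v)$ drop out of $\sigma^{ven}(v)-\sigma^{ven}(u)$) is correct --- it is exactly why $K_2$ admits no valid weighting and why the ``nice'' hypothesis is needed.

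But what you have written is not a proof, and you say so yourself. The concrete gap is in the Kalkowski-type sequential scheme: you never show that the invariant (each unprocessed vertex retains at least two reachable colors) survives the step where the weights at $v_i$ are committed. In the proof of $\chi^{ve}(G)\le 3$ this works because $w(v_i)$ moves only $\sigma^{ve}(v_i)$; here, as you observe, committing $w(v_i)$ perturbs every already-processed neighbor in $N[v_i]$, and no mechanism is offered to repair those colors or to absorb the perturbation within increments bounded by $2$. That missing mechanism is the entire content of the conjecture. So the honest verdict is: your reasoning contains no errors, but it proves nothing beyond the setup; since the statement is an open problem rather than a theorem of this paper, there is also no proof in the paper to compare against, and a complete argument here would have been a major result, not an exercise.
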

They also applied a similar proof technique stated in \cite{KKP} and showed  that $\chi^{ven}$ is bounded by $ 5 $ in general. Moreover, They gave an affirmative answer to Conjecture \ref{ven conj} for paths, cycles, bipartite graphs,  complete graphs and split graphs.

In section \ref{graph}, we  focus on graphs and present the main results of this section in   theoremS \ref{g1} and \ref{g2}.
Let $ K_n(t) $ be the complete $ n$-partite graph each part of size $ t $.  Davoodi and Omoomi in \cite{DO} showed that  $ \chi^{e}(K_n(t)) =2$ for $n,t \geq 2$.  Theorem \ref{g1} strengthens this result by determining  of $ \chi^{e}$ for more complete $ n$-partite graphs.
The authors in \cite{KLT} asserted that for every $3$-partite graph $G$, $ \chi^{e}(G)\leq 3$ and consequently, it is proved that $ \chi^{ven}(G)\leq 3$ \cite{B}.  In Theorem \ref{g2}, we determine the exact value of $\chi^{ven}$ for complete $3$-partite graphs.
\\

A {\it hypergraph} $\mathcal{H}$ is an ordered pair $\mathcal{H}=(V,E)$, where $V$ is a finite and non-empty set of vertices and $E$ is a collection of distinct non-empty  subsets of $V$.  The set of vertices and the set of edges of $\mathcal{H}$ are denoted by  $V(\mathcal{H})$ and $E(\mathcal{H})$, respectively.
An {\it $r$-uniform
	hypergraph} is a hypergraph such that all of its edges have size $r$. For a vertex  $v \in V(\cal{H})$, $N(v):=\{u: \exists\ e \in E(\mathcal{H}), \{v,u\}\subseteq e\}$ stands for the neighborhood of $v$.
A hypergraph vertex coloring is called \textit{proper} when each edge of $\mathcal{H}$ contains at least two vertices with
distinct colors.
Similar to the graph case,  $  \chi^{e}(\mathcal{H})$ is the minimum integer $ k $
for which there exists a weighting function $ w:E(\mathcal{H})\rightarrow\{1,2,\ldots, k\}$  inducing a proper vertex coloring. Also,  parameters $  \chi^{ve}$ and $  \chi^{ven}$ are defined for hypergraphs, in a similar manner.

Conjecture \ref{conj:123} has been generalized to $3$-uniform hypergraphs by Kalkowski,  Karon\'{s}ki, and Pfender in the following form \cite{KKP-H}.

\begin{conjecture}{\em\cite{KKP-H}}\label{conj:3-HYPER}
	For every $3$-uniform hypergraph $ \mathcal{H} $ with no  isolated edge, $ \chi^{e}(\mathcal{H})\leq 3 $.
\end{conjecture}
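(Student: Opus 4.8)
The plan is to produce an edge weighting $w:E(\mathcal{H})\to\{1,2,3\}$ under which no hyperedge is \emph{monochromatic}, i.e.\ for every $e=\{x,y,z\}$ the three sums $\sigma^{e}(x),\sigma^{e}(y),\sigma^{e}(z)$ are not all equal. First I would reduce to a single connected component, since weightings on distinct components are independent. The hypothesis that $\mathcal{H}$ has no isolated edge then guarantees that every hyperedge shares a vertex with some other hyperedge; this is the analogue of forbidding $K_2$ in the graph case, and it is exactly what will supply the room to break ties. Note at the outset the crucial obstruction specific to this setting: for $e=\{x,y,z\}$ the weight $w(e)$ occurs in all three of $\sigma^{e}(x),\sigma^{e}(y),\sigma^{e}(z)$, so changing $w(e)$ alone cannot separate them; all separation must come from edges incident to \emph{exactly one} vertex of $e$.

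The key reduction I would use is to parities. Since it suffices to exhibit, inside each hyperedge, two vertices whose sums differ, it is enough to arrange that the three sums of each hyperedge are \emph{not all of the same parity}, as two vertices of different parity automatically receive distinct sums. The weights $1$ and $3$ are odd while $2$ is even, so modulo $2$ an edge contributes $1$ precisely when its weight is odd. Writing $S\subseteq E(\mathcal{H})$ for the set of odd-weighted edges, the parity of $\sigma^{e}(v)$ equals $d_{S}(v)\bmod 2$, where $d_{S}(v)$ counts the edges of $S$ through $v$. Hence the parity pattern of the whole coloring depends only on the choice of $S$, and, decisively, switching an edge of $S$ between weight $1$ and weight $3$ shifts each incident sum by an even amount and leaves all parities untouched.

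This splits the construction into two decoupled phases. In Phase~1, I would choose $S$ so as to minimize the set $B$ of \emph{bad} hyperedges, namely those whose three vertex-sums are not merely of equal parity but actually equal; any hyperedge with two vertices of distinct $d_{S}$-parity is permanently safe and can never be spoiled afterward. A random choice of $S$ followed by local corrections (of entropy-compression or Moser--Tardos type) would control $B$ on the bounded-degree part of $\mathcal{H}$. In Phase~2, I would freeze $S$ and use only the weight-$1$-versus-weight-$3$ freedom to shift the sums of vertices lying in bad hyperedges by even amounts and thereby separate them. Because these shifts preserve every parity, each hyperedge already safe after Phase~1 remains safe, so Phase~2 interacts only with $B$; the no-isolated-edge hypothesis ensures every bad hyperedge has an incident edge touching just one of its three vertices, which is the lever used to move a single sum.

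The main obstacle is Phase~1 in the presence of high-degree vertices. Local tools such as the Lov\'asz Local Lemma degenerate when a vertex lies in many hyperedges, since its sum is then globally coupled to many constraints, and it is precisely here that even the graph $1$-$2$-$3$-conjecture is hard. I expect the decisive ingredient to be a global mechanism: either a discharging scheme that redistributes the parity ``load'' around high-degree vertices, or a Combinatorial-Nullstellensatz argument tailored to the fact that the forbidden configuration is the intersection of the \emph{two} hyperplanes $\sigma^{e}(x)=\sigma^{e}(y)$ and $\sigma^{e}(y)=\sigma^{e}(z)$ rather than a single one, which leaves strictly more maneuvering room than in the graph setting. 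Proving that $S$ can always be chosen so that the residual bad set $B$ is separable by even shifts, and that three weights (not merely a bounded number) always suffice uniformly over all components, is where the real difficulty resides.
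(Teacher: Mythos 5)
You cannot be graded against the paper's proof here, because the paper contains none: the statement you were given is Conjecture~\ref{conj:3-HYPER}, an \emph{open problem} quoted from Kalkowski, Karo\'nski and Pfender \cite{KKP-H}. The best general bound known, cited in the paper, is $\chi^{e}(\mathcal{H})\le\max\{5,r+1\}$ for $r$-uniform hypergraphs without isolated edges, and the paper itself only confirms the conjecture for special families (complete $n$-partite hypergraphs, $t$-tight paths and cycles, theta hypergraphs, affine and projective planes). Your text, accordingly, is not a proof but a research program, and you concede as much when you write that the high-degree case of Phase~1 ``is where the real difficulty resides''; an argument that defers the genuinely hard step to an unspecified ``discharging scheme or Combinatorial-Nullstellensatz argument'' establishes nothing.

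Beyond the admitted hole, there is a structural flaw in the claimed decoupling of the two phases. Your invariant is that every hyperedge ``safe'' after Phase~1 stays safe under the $1\leftrightarrow 3$ switches of Phase~2, but this holds only for \emph{parity-safe} edges, i.e.\ those containing two vertices with $d_S$-parities differing. A hyperedge whose three sums share a parity yet are currently distinct is not bad, not parity-safe, and not protected: an even shift applied to repair some edge of $B$ can equalize its sums and create a \emph{new} bad edge. So Phase~2 does not ``interact only with $B$,'' and you would need an iteration with a terminating potential function, which you do not supply. Two smaller inaccuracies: the no-isolated-edge hypothesis gives each bad edge a neighboring edge, but that neighbor may share \emph{two} vertices with it rather than ``just one'' (in a $3$-uniform hypergraph two distinct edges can meet in one or two vertices, and a shift through two shared vertices moves two sums in lockstep, a weaker lever than you assume); and even where Moser--Tardos applies, you never verify that the bad events (all three sums of a fixed hyperedge equal) have probability small relative to their dependency degree, which fails precisely at the regular, high-degree structures the paper identifies --- via Theorem~\ref{thm:almost} --- as the critical instances. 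If you want a tractable target in the spirit of your parity idea, prove the conjecture for one of the concrete regular families treated in Section~\ref{hgraphs} instead.
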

Let  $r\geq 3$ be an integer. In \cite{KKP-H}, it is shown that for any $r$-uniform hypergraph $ \cal H $ with no
isolated edge, $\chi^{e}(\mathcal{H})\leq{\max}\{5, r + 1\}$. Also, the authors in \cite{KKP-H} have proved that $\chi^{ve}(\mathcal{H})\leq 3$ for every hypergraph $\mathcal{H}$ with no edges of size $0$ or $1$.

The random hypergraph  $ \mathcal{H}^{r}_{n,p} $ is an $r$-uniform hypergraph on $n$ vertices such that each  edge is appeared randomly and independently, with probability $p$.
Applying probabilistic arguments, Bennett et al.  established  the following theorem  \cite{BDFL}.
\begin{theorem}\label{thm:almost}{\em\cite{BDFL}}
	Assume that $ \mathcal{H}= \mathcal{H}^{r}_{n,p}  $ is an $ r $-uniform random hypergraph. Asymptotically almost surely,  $ \chi^{e}(\mathcal{H})=2 $, if $ r=3 $ and   $ \chi^{e}(\mathcal{H})=1 $, whenever $ r\geq 4  $.
\end{theorem}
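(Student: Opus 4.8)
The statement bundles three assertions, each holding asymptotically almost surely (a.a.s.): for $r\ge 4$ that $\chi^{e}(\mathcal H)=1$, and for $r=3$ that $\chi^{e}(\mathcal H)\ge 2$ together with $\chi^{e}(\mathcal H)\le 2$. The organizing observation is that $\chi^{e}(\mathcal H)=1$ exactly when the all-ones weighting is proper, i.e.\ when no edge is \emph{monochromatic} in the degree sequence (all of its vertices having equal degree); indeed any constant weighting reproduces the degree colouring, so at $k=1$ the all-ones weighting is the only option. Thus everything hinges on counting monochromatic edges and, for the last assertion, on repairing them with a $\{1,2\}$-weighting. First I would record the concentration facts: in $\mathcal H^{r}_{n,p}$ each degree $D_v$ is $\mathrm{Bin}(\binom{n-1}{r-1},p)$ with mean $\mu\asymp n^{r-1}$ and standard deviation $\asymp n^{(r-1)/2}$, and Chernoff bounds confine all degrees to an interval of width $O(\sqrt{\mu\log n})$ a.a.s. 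Crucially, the degrees of the $r$ vertices of a fixed edge are only weakly correlated — two vertices share $\binom{n-2}{r-2}\asymp n^{r-2}$ potential common edges, so their correlation is $\asymp n^{-1}\to 0$ — hence for moment purposes they behave like independent Binomials.

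Next I would compute the first moment of $X$, the number of monochromatic edges. A local central limit theorem gives a point probability $\asymp n^{-(r-1)/2}$ for each degree in the bulk, so $P(e\text{ monochromatic})\asymp n^{-(r-1)^2/2}$ and hence $\mathbb E[X]\asymp n^{r}\cdot n^{-(r-1)^2/2}=n^{\,r-(r-1)^2/2}$. The exponent is negative exactly for $r\ge 4$ and equals $1$ when $r=3$. Thus for $r\ge 4$ Markov's inequality gives $X=0$ a.a.s., proving $\chi^{e}(\mathcal H)=1$; and for $r=3$ we have $\mathbb E[X]\asymp n\to\infty$, which I would upgrade to $X>0$ a.a.s.\ by a second-moment computation, splitting $\mathbb E[X^{2}]=\sum_{e,e'}P(e,e'\text{ monochromatic})$ according to whether $e,e'$ share $0$, $1$, or $2$ vertices and checking that the dominant (disjoint) term yields $\mathbb E[X^{2}]=(1+o(1))\mathbb E[X]^{2}$. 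Chebyshev then produces a monochromatic edge a.a.s., so the all-ones weighting fails and $\chi^{e}(\mathcal H)\ge 2$.

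The substantial part is the matching upper bound $\chi^{e}(\mathcal H)\le 2$ for $r=3$. The plan is to keep weight $1$ on almost all edges and promote a sparse set to weight $2$, using each promotion to split a monochromatic edge. Under the all-ones weighting the set $\mathcal B$ of bad edges has size $\Theta(n)$ a.a.s.\ by the first-moment estimate; I would first show $\mathcal B$ is \emph{sparse}, so that the bad edges, together with the edges used to repair them, decompose into small, essentially independent clusters, and then flip a few weights to $2$ in each cluster so every bad edge acquires two vertices of distinct weighted degree. The danger — and the real obstacle — is \emph{interference}: raising $w(f)$ to $2$ simultaneously shifts the weighted degrees of all three vertices of $f$, so a repair can turn a previously good edge monochromatic, and there are as many as $\Theta(n^{2})$ ``fragile'' good edges whose degrees lie within a bounded window. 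A plain union bound fails here, since the per-edge failure probability $\asymp n^{-2}$ times the number of edges $\asymp n^{3}$ diverges. I would therefore aim for a localized argument — a biased random $\{1,2\}$-weighting analysed via the Lov\'asz Local Lemma after the dependency degree has been cut down using the sparsity of $\mathcal B$ and the concentration window, or equivalently an iterative cluster-by-cluster recolouring — arranging that each bad edge is split while the bounded shifts cannot equalize the typically well-separated degrees of the good edges. Verifying that the dependencies are genuinely sparse enough for such a local argument to close is the crux of the whole proof; by comparison the moment estimates for $r\ge 4$ and the lower bound at $r=3$ are routine.
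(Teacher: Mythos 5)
First, a point of calibration: the paper does not prove this theorem at all --- it is quoted from \cite{BDFL}, so your attempt can only be measured against that source, not against anything internal to the paper. Within that frame, your reduction of $\chi^{e}(\mathcal{H})=1$ to ``no edge has all its vertices of equal degree'' is correct, and your moment computations are sound in outline: for constant $p$ the degrees are binomial with point probabilities $\asymp n^{-(r-1)/2}$, so the expected number of degree-monochromatic edges is $\asymp n^{\,r-(r-1)^2/2}$, which tends to $0$ exactly when $r\geq 4$ (Markov then gives $\chi^{e}=1$ a.a.s.) and is $\asymp n$ for $r=3$, where a standard second-moment argument yields $\chi^{e}\geq 2$ a.a.s. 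These two parts are acceptable modulo routine local-limit and correlation details.

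The genuine gap is the third and hardest assertion, $\chi^{e}(\mathcal{H}^{3}_{n,p})\leq 2$ a.a.s.: you do not prove it, you describe a plan and yourself defer its crux, and the plan as stated meets a concrete obstruction. For constant $p$, a.a.s. every two edges of $\mathcal{H}^{3}_{n,p}$ are joined by a third edge meeting both; consequently, for any product (random) $\{1,2\}$-weighting, the bad event attached to one edge shares weight-variables with the bad event of essentially every other edge, so the dependency graph in the variable form of the Lov\'asz Local Lemma is essentially complete, and with failure probabilities $\asymp n^{-2}$ and $\asymp n^{3}$ events one has $pd\asymp n\to\infty$: the Local Lemma is inapplicable without a prior reduction, and that reduction is precisely the missing content. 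To make it work one would have to (i) restrict the randomness to a sparse repair set $S$ chosen so that every monochromatic edge gets an asymmetric shift with probability bounded well away from $1$ --- note a single incident flip leaves it monochromatic with probability $1/2$, far too large, so each bad edge needs many ``independent'' incident repair edges; (ii) control the \emph{maximum}, not average, number of fragile edges per vertex; and (iii) rule out cascades, since each flip shifts three vertex sums simultaneously. None of this is carried out. (A smaller slip: under bounded shifts an edge can become monochromatic only if \emph{all three} of its degrees lie in a bounded window, and there are $\Theta(n)$ such edges, not $\Theta(n^{2})$; the $\Theta(n^{2})$ count corresponds to edges with only two close degrees, which are harmless.) As it stands, your proposal establishes $\chi^{e}=1$ for $r\geq 4$ and $\chi^{e}\geq 2$ for $r=3$, but the upper bound for $r=3$ --- the substance of the theorem --- remains unproven.
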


Moreover, they extended Conjecture \ref{conj:3-HYPER} to $r$-uniform hypergraphs as follows.
\begin{conjecture}\label{conj:r-HYPER}{\em\cite{BDFL}}
	Let $r \geq 3$ and  $\mathcal{H}$ be an $r$-uniform hypergraph  with no isolated edge, then $ \chi^{e}(\mathcal{H})\leq 3. $
\end{conjecture}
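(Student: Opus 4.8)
Conjecture \ref{conj:r-HYPER} strictly generalizes the still-open Conjecture \ref{conj:3-HYPER}, so I would not aim at a complete proof; instead I would first pin down why the hypothesis ``no isolated edge'' is the right one, and then split the problem according to the uniformity $r$. The key preliminary observation is that there is no \emph{local} obstruction. Call two vertices \emph{twins} if they lie in exactly the same set of edges; a single edge $e=\{v_1,\dots,v_r\}$ is forced to be monochromatic under every weighting precisely when $v_1,\dots,v_r$ are mutually twins. But in an $r$-uniform hypergraph this cannot happen for a non-isolated edge: if every vertex of $e$ had the same incident-edge set, then any edge meeting $e$ would contain all of $v_1,\dots,v_r$ and hence, being of size $r$, equal $e$ — so $e$ would be isolated. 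Thus for every non-isolated edge there is a pair $v_i,v_j\in e$ and an edge $f\ne e$ lying in exactly one of their neighborhoods, so that $\sigma^e(v_i)-\sigma^e(v_j)$ genuinely depends on $w(f)$ and can be made nonzero even with $w(f)\in\{1,2,3\}$. The whole difficulty is therefore global: realizing all of these potential ``escapes'' simultaneously using only three weight-values.

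For large $r$ I would try to make this quantitative. Theorem \ref{thm:almost} already shows that a single weight suffices asymptotically almost surely once $r\ge4$, the point being that with $w\equiv1$ one has $\sigma^e(v)=\deg(v)$, so that monochromatic edges are exactly those all of whose vertices share a common degree. To cover the worst case I would assign each edge an independent uniform weight from $\{1,2,3\}$ and bound the probability that a fixed edge $e$ stays monochromatic; this is the probability that $r-1$ linear forms in the edge-weights all vanish, and by the non-twin observation each such form is non-constant, so the probability should decay with $r$. A Lov\'asz-Local-Lemma argument would then finish the large-$r$ range, the technical point being the dependency degree, which is governed by the maximum codegree (the number of edges through any two fixed vertices) rather than by the maximum degree.

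For small $r$, where the local lemma has no room, I would argue structurally. One option is to refine the vertex-by-vertex scheme of Kalkowski, Karo\'nski and Pfender — which already yields the bound $\max\{5,r+1\}$ — by maintaining, for each still-incomplete edge, a potential equal to the number of its vertices whose partial sums are currently tied, and showing that three weight-values always leave one vertex free to split off. For the explicit families named in the abstract (complete $n$-partite hypergraphs, paths, cycles, theta hypergraphs, and the geometric planes) I would instead simply exhibit a weighting: their incidence patterns are regular enough that a periodic assignment of $1$'s and $2$'s, read off from the block structure, can be checked edge by edge to be neighbor sum distinguishing.

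The main obstacle is precisely the uniform descent from $\max\{5,r+1\}$ down to $3$. The algorithmic bound ``spends'' its surplus values to guarantee that every newly processed vertex can dodge all forbidden sums at once, and squeezing this to three seems to require either a global algebraic certificate — here the Combinatorial Nullstellensatz is unnatural, since the proper-coloring condition for a hyperedge is the \emph{disjunction} ``some two of its vertices differ'' rather than a single non-vanishing product — or a bespoke ordering and potential that exploit the incidence structure directly. I therefore expect the realistic outcome to be a confirmation of the bound $3$ for the structured families above, leaving the general statement, like Conjecture \ref{conj:3-HYPER}, open.
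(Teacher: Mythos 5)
First, a point of order: the statement you were asked about is not a theorem of this paper at all --- it is an open conjecture, due to Bennett, Dudek, Frieze and Helenius \cite{BDFL}, which the paper merely restates. The paper contains no proof of it and does not claim one; its contribution is exactly what you predict at the end of your proposal, namely verification for structured families: complete $n$-partite hypergraphs (Theorem \ref{complete}), $t$-tight paths and cycles (Theorems \ref{path} and \ref{cycle}), theta hypergraphs (Theorem \ref{teta}), and affine/projective planes (Theorem \ref{plane}). So your assessment of the realistic outcome is accurate, and your ``no local obstruction'' observation (a non-isolated edge of an $r$-uniform hypergraph cannot consist of mutual twins, since any edge meeting it would have to contain all $r$ of its vertices and hence equal it) is correct and is consistent with the twin-free conjecture of Kalkowski et al.\ \cite{KKP-H} quoted in the paper's concluding section.

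However, one ingredient of your program has a genuine gap: the Lov\'asz-Local-Lemma plan for large $r$. You argue that the probability that a fixed edge stays monochromatic under independent uniform weights ``should decay with $r$'' because each of the $r-1$ difference forms is non-constant. This fails in general, precisely because of twins. If $e=\{v_1,\dots,v_r\}$ has $v_2,\dots,v_r$ pairwise twins (as happens in every hypergraph of the family $\mathcal{H}^r(G)$ discussed in the paper's concluding remarks, e.g.\ blown-up cycles), then all but one of your difference forms vanish identically, and the probability that $e$ is monochromatic is the probability that a \emph{single} non-constant form vanishes, which is bounded below by a constant independent of $r$. Since these twin-heavy hypergraphs are exactly the known members of $\mathfrak{B}^r$, no amount of increasing $r$ rescues the union bound or the dependency calculation; a twin-free hypothesis (or a structural reduction eliminating twins) would be needed before any local-lemma argument can start. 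A smaller inaccuracy: for the structured families you should not expect weights $\{1,2\}$ plus a periodic pattern to always suffice --- Theorems \ref{cycle} and \ref{teta} show that $t$-tight cycles with $t=\frac{r}{2}$ and length $\not\equiv 0 \pmod 4$, and certain theta hypergraphs, genuinely require the weight $3$, so the explicit constructions must be patched accordingly.
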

It is inferred from Theorem \ref{thm:almost} that every edge in an  $ r $-uniform random hypergraph with $ r\geq 4 $ contains two vertices of different degrees with   high probability. This fact motivates researchers to explore regular $ r $-uniform hypergrphs in Conjecture \ref{conj:r-HYPER} as the most challenging structures.

In section \ref{hgraphs}, we determine the exact values of  $\chi^{e}$,  $\chi^{ve}$ and  $\chi^{ven}$ for the well-known  families of hypergraphs.  Indeed, we will see that some  $ r $-uniform hypergraphs can not be weighted by 1 and 2 and hence, three colors are required. Moreover, we show that three colors are sufficient for them.
Briefly,  main results of section  \ref{hgraphs} are as follows. Theorem \ref{complete} determines  the parameter $ \chi^{e}$ for the complete $ n$-partite $ r$-uniform hypergraph with parts of equal sizes. This theorem is a generalization of the result in \cite{DO} to hypergraphs. Moreover, as a straightforward corollary, we get $ \chi^{e}(\mathcal{K}_n^r)=2 $ where $\mathcal{K}_n^r$ denotes the complete  $ r$-uniform hypergraph of order $n$.
Also, correctness of Conjecture \ref{conj:r-HYPER} is confirmed for $t$-tight paths and $t$-tight cycles in Theorems \ref{path} and \ref{cycle}, respectively. Consequently, the exact value of $\chi^{e}$ is determined for loose paths, loose cycles, tight paths and tight cycles.
The theta hypergraph $\mathcal{H}_{\Theta}$ is considered as a generalization of $t$-tight cycle and the value of $\chi^{e}(\mathcal{H}_{\Theta})$ is determined by Theorem \ref{teta}.
Another regular structure that we explore is the family of geometric planes.  In Theorem \ref{plane}, the parameter $\chi^{e}$ is determined  for affine planes and projective planes which are known as the most applicable geometric planes.

\section{Main results for graphs\label{graph}}
In this section, we will prove two theorems in graph case.
Davoodi and Omoomi proved that $ \chi^{e}(K_n(t)) =2$, for $n,t \geq 2$ \cite{DO}. In the first theorem,  we have eliminated the equality requirement on the parts. Precisely,  we demonstrate that $ \chi^{e}(G) =2$, where $G$ is  complete $n$-partite graph with parts of  different sizes.
The second  theorem  specifies  $\chi^{ven}$ for any complete $ 3$-partite graph $G$. This statement improves the result $ \chi^{ven}(G)\leq 3$ due to Baudon et al. in  \cite{B}, as well.

\begin{theorem}\label{g1}
	Let  $G$ be a complete $ n$-partite graph with at least $\lfloor \frac{n-1}{2}\rfloor$ parts of size greater than one. Then $ \chi^{e}(G) =2$.
\end{theorem}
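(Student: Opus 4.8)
The plan is to translate the problem into choosing a spanning subgraph and then to prescribe one sum per part. Write $N=|V(G)|$ and let $V_1,\dots,V_n$ be the parts, with $|V_i|=n_i$, so every vertex of $V_i$ has degree $d_i=N-n_i$ in $G$. A weighting $w\colon E(G)\to\{1,2\}$ is the same thing as a choice of subgraph $H\subseteq G$ (the edges receiving weight $2$): if $\deg_H(v)$ denotes the degree of $v$ in $H$, then $\sigma^e(v)=d_i+\deg_H(v)$ for $v\in V_i$. Since two vertices are adjacent exactly when they lie in different parts, it suffices to produce an $H$ for which $\sigma^e$ is \emph{constant on each part} and \emph{distinct between parts}. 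Thus I aim to pick target sums $s_1,\dots,s_n$, all distinct, together with $H$ realizing $\deg_H(v)=s_i-d_i=:g_i$ for every $v\in V_i$.

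For the lower bound, note that $\chi^e(G)=1$ would force $\deg_G$ itself to be a proper coloring, which fails as soon as two parts have equal size (their vertices are adjacent and of equal degree); so $\chi^e(G)\ge 2$ in the intended range, and the whole content is the upper bound $\chi^e(G)\le 2$, i.e.\ the existence of the subgraph $H$ above.

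The construction splits into two tasks. First, distinctness: since $g_i$ may be any integer in $[0,d_i]$, the admissible sums for part $i$ form the integer interval $[d_i,2d_i]$, and I must choose a system of distinct representatives $s_i\in[d_i,2d_i]$. These intervals have length $d_i=N-n_i$, which is large unless a single part is almost all of $V(G)$; a short Hall-type (greedy, sweeping the candidate values in increasing order) argument then yields distinct $s_i$, and moreover one can arrange the associated $g_i$ to satisfy the balance inequality $2n_ig_i\le\sum_{j}n_jg_j$ for every $i$. Second, realizability: I must show that the prescribed degrees $g_i$ (constant on each part) are realized by some $f$-factor of the complete multipartite graph $G$. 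The only obstructions here are a parity condition on $\sum_i n_ig_i$ and the cut condition $n_ig_i\le\sum_{j\ne i}n_jg_j$ (each part's total demand must be absorbed by the rest), and both can be met while keeping the $s_i$ distinct -- if necessary by perturbing a single $g_i$ by one, or by letting the sums inside one part split into two consecutive values, which creates no cross-part collision.

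The hard part is the realizability step, and this is exactly where the hypothesis that at least $\lfloor\frac{n-1}{2}\rfloor$ parts have size greater than one is used. Singleton parts are the enemies: a singleton must receive a distinct, necessarily large, $g_i$, yet being a single vertex it contributes almost nothing ($n_i=1$) to the right-hand side of the cut condition that has to absorb the other parts' weight-$2$ edges. Bounding the number of singletons -- equivalently, guaranteeing enough parts of size $\ge 2$ to act as receivers -- is precisely what makes the cut inequalities simultaneously satisfiable, and I expect the threshold $\lfloor\frac{n-1}{2}\rfloor$ to emerge from making this counting tight. Once such an $H$ exists, assigning weight $2$ on $H$ and weight $1$ elsewhere yields a neighbour-sum-distinguishing $\{1,2\}$-weighting, giving $\chi^e(G)\le 2$.
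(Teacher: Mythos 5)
Your proposal is a plan rather than a proof, and the step you yourself flag as ``the hard part'' rests on a claim that is false. You assert that for realizing a subgraph $H$ of the complete multipartite graph with $\deg_H(v)=g_i$ for all $v\in V_i$, ``the only obstructions are a parity condition on $\sum_i n_ig_i$ and the cut condition $n_ig_i\le\sum_{j\ne i}n_jg_j$.'' This is not true. Take three singleton parts (so $G=K_3$) and $(g_1,g_2,g_3)=(2,2,0)$: the total is even, every cut inequality holds, and $g_i\le d_i=2$ for each $i$, yet no realization exists, since $g_1=2$ forces the first vertex to be joined to the third, contradicting $g_3=0$. The genuine necessary and sufficient conditions are those of Tutte's $f$-factor theorem (a system of inequalities over pairs of disjoint vertex subsets, not a single cut per part), and verifying them for a suitable choice of distinct targets $s_i$ is precisely the content of the theorem. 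You never do this: the selection of distinct representatives, the satisfaction of the (correct) realizability conditions, and the appearance of the threshold $\lfloor\frac{n-1}{2}\rfloor$ are all deferred to expectations (``a short Hall-type argument then yields\dots'', ``I expect the threshold \dots to emerge from making this counting tight''). So the argument has a hole exactly where the theorem lives; nothing beyond the easy reformulation $\sigma^e(v)=d_i+\deg_H(v)$ and the (correct) lower-bound remark is actually established.

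For comparison, the paper avoids realizability questions entirely by an explicit construction. Ordering the parts as $m_1\ge\cdots\ge m_k\ge n_k\ge\cdots\ge n_1$ (for $n=2k$; the odd case is reduced to the even one by peeling off a largest part, whose vertices then have strictly smallest sums), it takes the auxiliary graph $B$ on $\{x_1,\dots,x_k\}\cup\{y_1,\dots,y_k\}$ with $B[Y]\cong K_k$ and $N_B(x_i)=\{y_1,\dots,y_i\}$, blows it up to $B^*\subseteq G$ according to the part sizes, and puts weight $2$ exactly on $E(B^*)$, with a one-edge-set tweak (a pendant vertex $z$, or a perfect matching between $X_{k-1}$ and $X_k$) in the two degenerate cases where $m_k=n_k$. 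Properness is then verified by direct size inequalities, and the hypothesis that at least $\lfloor\frac{n-1}{2}\rfloor$ parts exceed size one is used only to make this case analysis exhaustive (it rules out $m_{k-1}=m_k=n_k=1$), not through any cut-condition counting. If you want to salvage your $f$-factor route, you must work with the full $f$-factor obstructions, which is substantially harder than the explicit degree-monotone subgraph the paper exhibits.
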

\begin{proof}
	First, assume that $ n=2k $ for some $k\in \mathbb{N}$ and $X_1, X_2, \ldots ,X_k, Y_1,Y_2,\ldots ,Y_k$ are  parts of $G$ such that $|X_i|=m_i$, $|Y_i|=n_i$,  for $ 1\leq i\leq k $ and  $m_1\geq m_2\geq \cdots \geq m_k\geq n_k\geq n_{k-1}\geq \cdots \geq n_1$.
	Let $ B[X,Y] $ be the  graph with parts $ X=\{x_1,x_2,\ldots, x_k\} $ and $ Y=\{y_1,y_2,\ldots, y_k\} $ in which $ B[Y]\cong K_k $ and $ N_B(x_i)=\{y_1,y_2,\ldots, y_i\} $ for $ 1\leq i \leq k $.
	Suppose that $ B^* $ is the graph obtained from $ B $ by blowing up  every vertex $x_i$ with a set of  size $m_i$ and every vertex $y_i$ with a set of  size $n_i$, for $1\leq i \leq k $. Note that $B^*$ is a subgraph of $G$.
	We denote by $X_i $ ( resp. $Y_i$)   the set of  vertices corresponding to $x_i$ ( resp. $y_i$) in $V(B)$,  for every   $1\leq i \leq k $.\\
	
	\textbf{Case 1.} $m_k > n_k$.\\
	In this case we define $w(e)=2$ if $e\in E(B^*)$ and $w(e)=1$, otherwise.
	Hence, for every vertex $ v\in V(G) $ we have,
	\begin{center}
		\begin{equation}\label{1}
		\sigma^{e}(v)=
		\left\{
		\begin{array}{ll}
		|V(G)|+\sum _{j=1}^i n_j-m_i & \ \ \ v\in X_i,  \\
		|V(G)|+\sum _{j=i}^k m_j + \sum_{j=1}^k n_j-2n_i & \ \ \ v\in Y_i.
		\end{array}
		\right.
		\end{equation}
	\end{center}
	We claim that $ w $ induces a proper vertex coloring for $ G $. Let $1\leq \ell <\ell'  \leq k $.
	Suppose to the contrary that $\sigma^{e}(x_{\ell})=\sigma^{e}(x_{\ell'})$. Then, $n_{\ell +1}+\cdots +n_{\ell'}+m_{\ell}=m_{\ell'},$
	which is impossible since $m_{\ell}\geq m_{\ell'}$ and $n_{\ell +1}+\cdots +n_{\ell'} >0$. Now, suppose that $\sigma^{e}(y_{\ell})=\sigma^{e}(y_{\ell'})$. Thus,
	$m_{\ell}+\cdots +m_{\ell' -1}+2n_{\ell'}=2n_{\ell}.$
	It is impossible since $n_{\ell'}\geq n_{\ell}$ and $m_{\ell}+\cdots +m_{\ell' -1} >0$.
	If  $\sigma^{e}(x_{\ell})=\sigma^{e}(y_{\ell'})$. Then, $\sum_{j=1}^k n_j+\sum _{j=\ell'}^k m_j+m_{\ell}=\sum _{j=1}^{\ell} n_j +2n_{\ell'}.$
	It doesn't occur because $ \sum_{j=1}^k n_j >\sum _{j=1}^{\ell} n_j$ and  $m_k+m_{\ell} \geq 2n_{\ell'}$.
	Now, let $\sigma^{e}(x_{\ell'})=\sigma^{e}(y_{\ell})$. Hence, $\sum_{j=1}^k n_j+\sum _{j=\ell}^k m_j+m_{\ell'}=\sum _{j=1}^{\ell'} n_j +2n_{\ell}.$
	It is impossible since $ \sum_{j=1}^k n_j \geq \sum _{j=1}^{\ell'} n_j$ and $m_{\ell}+m_k+m_{\ell'} >2n_{\ell}$.
	If  $\sigma^{e}(x_{\ell})=\sigma^{e}(y_{\ell})$, for some  $1\leq \ell \leq k $. Then, $\sum _{j=1}^{\ell} n_j +2n_{\ell}=m_{\ell}+\sum _{j=\ell}^k m_j+\sum_{j=1}^k n_j$. It is impossible since $\sum_{j=1}^k n_j\geq \sum _{j=1}^{\ell} n_j $, $m_{\ell}+m_k \geq 2n_{\ell}$ and $m_k >n_k \geq n_{\ell}$.\\
	
	\textbf{Case 2.} $n_k = m_k$ and $m_{k-1}>m_k.$\\
	Consider a fixed vertex $z$ in $X_{k-1}$.  Now, define weighting function $w$ as follows. Set $w(e)=2$ if $e\in E(B^*)$ or $e=zv$ for every $v\in X_k$ and $w(e)=1$, otherwise. Thus, for every vertex $ v\in V(G) $ we have,
	\begin{center}
		\begin{equation}\label{1}
		\sigma^{e}(v)=
		\left\{
		\begin{array}{ll}
		|V(G)|+\sum _{j=1}^i n_j-m_i & \ \ \ v\in X_i, i\neq k, v\neq z,  \\
		|V(G)|+\sum _{j=1}^{k-1} n_j +m_{k}-m_{k-1} & \ \ \ v=z,\\
		|V(G)|+\sum _{j=1}^{k} n_j -m_{k}+1 & \ \ \ v\in X_k, \\
		|V(G)|+\sum _{j=i}^k m_j + \sum_{j=1}^k n_j-2n_i & \ \ \ v\in Y_i.
		\end{array}
		\right.
		\end{equation}
	\end{center}
	Similar to the previous case we show that $w$ induces a proper vertex coloring for $G$ in this case. Consider a vertex $u \in X_k$. Suppose that $\sigma^{e}(u)=\sigma^{e}(x)$, for some  $x\in X_{\ell}$, $1\leq \ell \leq k-1 $ and $x \neq z$. Therefore, $\sum _{j=1}^{k} n_j + m_{\ell}+1=m_k +\sum _{j=1}^{\ell} n_j$. This is a contradiction since $\sum _{j=1}^{k} n_j >\sum _{j=1}^{\ell} n_j$ and $m_{\ell} > m_k$.
	If $\sigma^{e}(u)=\sigma^{e}(y)$, for some  $y \in Y_{\ell}$, $1\leq \ell \leq k $, then $2n_{\ell}+1=m_k + \sum _{j=\ell}^k m_j.$
	If $\ell =k$, then  we have $2n_{\ell}+1=2m_k$ that does not hold. Otherwise $1\leq \ell \leq k-1$. Since $m_{\ell}> m_k \geq n_{\ell}$ and $m_{\ell} >1$,  it is clear that $2m_k +m_{\ell} >2n_{\ell}+1$ that is a contradiction.
	Now let $\sigma^{e}(u)=\sigma^{e}(z)$. Thus, $n_k +m_{k-1}+1=2m_{k}.$
	Equality $m_k=n_k$ implies $m_{k-1}+1=m_{k}$ that contradicts  $m_{k-1}> m_{k}$.
	If $\sigma^{e}(z)=\sigma^{e}(x)$ for some  $x\in X_{\ell}$, $1\leq \ell \leq k-2 $, then
	$\sum _{j=1}^{k-1} n_j +m_k+m_{\ell}=\sum _{j=1}^{\ell} n_j+m_{k-1}$. It does not occur since $\sum _{j=1}^{k-1} n_j >\sum _{j=1}^{\ell} n_j$ and $m_{\ell} \geq m_{k-1}$.
	Finally, let $\sigma^{e}(z)=\sigma^{e}(y)$ for some  $y\in Y_{\ell}$, $1\leq \ell \leq k $. Hence, $2n_{\ell}=m_{k-1}+\sum _{j=\ell}^k m_j $. It contradicts $m_{k-1} >m_k \geq n_{\ell}$.\\
	
	\textbf{Case 3.} $n_k = m_k=m_{k-1}>1$\\
	Let $X_{k}=\{u_1,u_2,\ldots ,u_p\}$, $X_{k-1}=\{u'_1,u'_2,\ldots ,u'_p\}$ and $A=\{u_1u'_1, u_2u'_2,\ldots ,u_pu'_{p}\}\subset E(G)$. Here, we present a weighting function as follows. We define  $w(e)=2$ if $e\in E(B^*)\cup A$ and $w(e)=1$ otherwise. Therefore, for every vertex $ v\in V(G) $ we have,
	\begin{center}
		\begin{equation}\label{1}
		\sigma^{e}(v)=
		\left\{
		\begin{array}{ll}
		| V(G)|+\sum _{j=1}^i n_j-m_i & \ \ \ v\in X_i, 1\leq i\leq k-2,  \\
		| V(G)|+\sum _{j=1}^{k-1} n_j -m_{k-1}+1 & \ \ \ v\in X_{k-1},\\
		|V(G)|+\sum _{j=1}^{k} n_j -m_{k}+1 & \ \ \ v\in X_k, \\
		|V(G)|+\sum _{j=i}^k m_j + \sum_{j=1}^k n_j-2n_i & \ \ \ v\in Y_i.
		\end{array}
		\right.
		\end{equation}
	\end{center}
	Consider two fixed vertices $u\in X_k$ and $u'\in X_{k-1}$. Suppose that $\sigma^{e}(u)=\sigma^{e}(x)$ for some  $x\in X_{\ell}$, $1\leq \ell \leq k-2. $ Then, $ m_{k}=\sum _{j={\ell}+1}^k n_j+m_{\ell}+1$ that is a contradiction. If $\sigma^{e}(u')=\sigma^{e}(x)$ for some  $x\in X_{\ell}$, $1\leq \ell \leq k-2 $, then $ m_{k-1}=m_{\ell}+\sum _{j={\ell}+1}^{k-1} n_j+1$. It is not possible since $m_{\ell}+1>m_{k-1}$. By the assumptions $m_{k}=m_{k-1}$ and $n_k >0$, we have  $\sigma^{e}(u)>\sigma^{e}(u')$. Now, let $\sigma^{e}(u)=\sigma^{e}(y)$ for some  $y\in Y_{\ell}$, $1\leq \ell \leq k. $ Hence, $2n_{\ell}+1=m_{k}+ \sum _{j={\ell}}^{k} m_j$. It does not occur since $m_{\ell}\geq m_k \geq n_{\ell}$ and $m_k >1$. If $\sigma^{e}(u')=\sigma^{e}(y)$ for some  $y\in Y_{\ell}$, $1\leq \ell \leq k. $ Then, $2n_{\ell}+1=m_{k-1}+\sum _{j={\ell}}^{k} m_j +n_k$ which is not possible.
	Since $u$ and $u'$ are chosen arbitrarily, we are done.
	
	Now, we may assume that $n=2k+1$ for some integer $k$. Let $V_1,V_2,\ldots ,V_{n}$ be the  parts of $G$ such that $|V_i|\geq |V_{i+1}|$ for every $1\leq i \leq n-1$. Therefore, $deg_G(v)\leq deg_G(u)$ for every $v\in V_1$ and $u \in V(G)\setminus V_1$ and  graph $G-V_1$ is a complete $(n-1)$-partite graph, where $n-1=2k$. Now, construct graph $B^*$ on $G-V_1$, as mentioned above. Clearly, $deg_{B^*}(v)=0$ for every vertex $v\in V_1$ and  $deg_{B^*}(v)>0$ for remaining vertices of $G$. Here, we consider weighting function $w$ similar to what applied in the previous cases. One can easily see that $\sigma^{e}(v)=deg_{G}(v)\leq deg_{G}(u)<\sigma^{e}(u)$ for every  $v\in V_1$ and $u\in V(G)\setminus V_1$. Also, according to what mentioned above, we have $\sigma^{e}(u)\neq \sigma^{e}(u')$ for every $u,u'\in V(G)\setminus V_1$.
\end{proof}

\begin{theorem}\label{g2}
	Let $ G \ncong K_3 $ be  the complete $ 3$-partite graph with parts $ V_1, V_2, V_3 $. Then
	\begin{equation*}
	\chi^{ven}(G)=
	\left\{
	\begin{array}{cl}
	1 & \ \ \ |V_i|\neq|V_j|,\  i\neq j,\  1\leq i,j\leq 3,  \\
	2 & \ \ \ otherwise.
	\end{array}
	\right.
	\end{equation*}
\end{theorem}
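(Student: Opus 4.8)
The plan is to treat the two asserted values separately and, wherever possible, to reduce the problem to an edge-weighting question. First I would record the weighting with $k=1$, that is $w\equiv 1$ on $V(G)\cup E(G)$. Since $G$ is simple, for this $w$ we have $\sum_{e\ni v}w(e)=\deg(v)$ and $\sum_{u\in N(v)}w(u)=|N(v)|=\deg(v)$, so $\sigma^{ven}(v)=1+2\deg(v)$. In a complete $3$-partite graph a vertex of $V_i$ has degree $|V(G)|-|V_i|$, hence two adjacent vertices, lying in $V_i$ and $V_j$, get the same color under $w\equiv 1$ exactly when $|V_i|=|V_j|$. This already yields $\chi^{ven}(G)=1$ when the three part sizes are pairwise distinct, and $\chi^{ven}(G)\ge 2$ as soon as two of them coincide. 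Thus the real content of the theorem is the bound $\chi^{ven}(G)\le 2$ in the second case.

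For that upper bound the key observation is that the neighbor term is constant on each part: if $v\in V_i$ then $N(v)=V(G)\setminus V_i$, so $\sum_{u\in N(v)}w(u)=W-W_i$ with $W=\sum_u w(u)$ and $W_i=\sum_{u\in V_i}w(u)$. Hence $\sigma^{ven}(v)=w(v)+\sigma^{e}(v)+(W-W_i)$, and because adjacent vertices lie in different parts, $w$ is neighbor full sum distinguishing exactly when the three value sets $\{\sigma^{ven}(v):v\in V_i\}$ are pairwise disjoint. I would first dispose of the case in which all three parts share a common size $t$, so $G\cong K_3(t)$ with $t\ge 2$ (the value $t=1$ being the excluded $K_3$). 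Setting every vertex weight equal to $1$ makes $W-W_i=|V(G)|-t$ the same for all parts, so $\sigma^{ven}(v)=1+(|V(G)|-t)+\sigma^{e}(v)$, and properness of $\sigma^{ven}$ becomes equivalent to properness of $\sigma^{e}$. A neighbor sum distinguishing edge $2$-weighting exists because $\chi^{e}(K_3(t))=2$ by Theorem \ref{g1} (due to \cite{DO}), which settles this case.

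It remains to handle exactly two equal parts, say $|V_1|=|V_2|=s\neq r=|V_3|$. Keeping all vertex weights equal to $1$ again reduces everything to $\sigma^{e}$, since now $\sigma^{ven}(v)=1+\sigma^{e}(v)+\deg(v)$; any two adjacent vertices of equal degree, in particular every pair with one endpoint in $V_1$ and one in $V_2$ as well as all within-part pairs, are automatically separated once $\sigma^{e}$ is proper, which I take from $\chi^{e}(G)=2$ (Theorem \ref{g1}). The only pairs that can still coincide are those joining $V_3$ to $V_1\cup V_2$, where the degrees differ by $2|s-r|$ and the edge-sum difference might cancel this gap. To repair them I would use the vertex weights on $V_3$ as a controlled shift: if a subset $T\subseteq V_3$ with $|T|=k$ is reweighted to $2$, then each color of $V_1\cup V_2$ increases by exactly $k$ (every such vertex sees all of $V_3$), a vertex of $V_3$ increases by $1$ if it lies in $T$ and by $0$ otherwise, and the mutual separation of $V_1$ and $V_2$ is left untouched. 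So the color set of $V_3$ can be translated relative to that of $V_1\cup V_2$, and I would pick $k$, together with the weights of the edges joining $V_3$ to $V_1\cup V_2$ in the initial weighting, so as to make the two sets disjoint.

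The main obstacle is precisely this placement step. The colors of $V_1\cup V_2$ fill a subset of an interval of length $s+r$, whereas the available shift $k$ ranges only over $\{0,\dots,r\}$, so disjointness is not automatic; I expect the argument to split into sub-cases according to the sign and magnitude of $s-r$ and to fix a tailored initial weighting in each, in the same spirit as the three-case construction in the proof of Theorem \ref{g1}. The delicate point is that the edges incident to $V_3$ also meet $V_1\cup V_2$, so adjusting them to position $V_3$ must be done without reintroducing any collision inside $V_1\cup V_2$; checking that some admissible choice achieves both simultaneously is where I would expect to spend most of the effort.
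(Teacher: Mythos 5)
Your treatment of the value $1$ case, of the lower bound $\chi^{ven}(G)\ge 2$ when two part sizes coincide, and of the case $|V_1|=|V_2|=|V_3|=t$ (all-ones vertex weights plus $\chi^{e}(K_3(t))=2$ from Theorem \ref{g1}) matches the paper and is correct. The gap is in the remaining case, exactly two equal parts: there you do not actually prove the upper bound $\chi^{ven}(G)\le 2$; you describe a repair strategy and yourself flag that its crucial step --- choosing $k=|T|$ and the edge weights incident to $V_3$ so that the color set of $V_3$ becomes disjoint from that of $V_1\cup V_2$ without creating new collisions --- remains to be verified. As described, this step is genuinely problematic: you take the neighbor sum distinguishing $2$-edge-weighting of Theorem \ref{g1} as a black box, so the colors on $V_1\cup V_2$ form an uncontrolled set inside an interval of length roughly $s+r$, while your translation parameter $k$ ranges only over $\{0,\dots,r\}$; moreover reweighting $T$ shifts the vertices of $T$ themselves by $1$, so $V_3$'s colors move relative to $V_1\cup V_2$ by $-k$ or $1-k$, splitting $V_3$ into two classes. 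Nothing in the proposal shows that an admissible choice exists, and this is exactly the content of the theorem, not a routine check. (A small slip as well: a vertex of $V_1$ has degree $s+r$ and a vertex of $V_3$ has degree $2s$, so the degree gap across $[V_3,V_1\cup V_2]$ is $|s-r|$, not $2|s-r|$.)

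The missing idea, which is how the paper closes this case, is to exploit your own key observation more aggressively: choose the total weighting constant on each part and constant on each bipartite block $[V_i,V_j]$. Then not only the neighbor term but also $\sigma^{e}$, hence $\sigma^{ven}$, is constant on each part, and since only vertices in different parts are adjacent, properness reduces to three explicit numbers being pairwise distinct. The paper takes $w\equiv 1$ on vertices, weight $1$ on $[V_2,V_3]$ and $2$ on all other edges, giving the three colors $3n-3t+1$, $2n-t+1$, $5t+1$ (with $t=|V_1|=|V_2|$ and $n=|V(G)|$); these are distinct unless $|V_3|=\frac{2t}{3}$, and that single exceptional case is handled by a second block-constant weighting (weight $2$ on $[V_1,V_2]$ and on the vertices of $V_1$, weight $1$ elsewhere). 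With this choice no appeal to Theorem \ref{g1} is needed in the two-equal-parts case, and the placement problem you anticipate never arises.
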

\begin{proof}
 	Obviously, if $ |V_i|\neq|V_j| $ for $ 1\leq i\neq j\leq 3 $, then  $ w : V(G)\cup E(G) \rightarrow \{1\}$ turns out a proper vertex coloring and consequently, $ \chi^{ven}(G)=1 $. For the case $ |V_1|=|V_2|=|V_3|=t $,  Theorem \ref{g1} implies that $\chi^{e}$ and consequently $ \chi^{ven} $ are equal to 2.
	Thus, one may assume that $ G $ contains exactly two parts $ V_1 $ and $ V_2$ of a same size. We consider the following two cases and in each case present  a total weighting function  which induces a proper vertex coloring.\\
	
	\textbf{Case 1.}
	$ |V_1|= |V_2| =t $ and $ |V_3|\notin\{t,\frac{2t}{3}\} $.\\
	Let $n=|V_1|+|V_2|+|V_3|$. Then $n>2t$. Define function $ w:V(G)\cup E(G)\rightarrow \{1,2\} $ as follows. For every vertex $ v\in V(G) $, $ w(v)=1 $ and
	\begin{equation*}
	w(e)=
	\left\{
	\begin{array}{ll}
	1 & \ \ \ e\in [V_2,V_3],  \\
	2 & \ \ \ \rm{otherwise,}
	\end{array}
	\right.
	\end{equation*}
	in which  notation $ e\in [V_2,V_3]$  means that edge $e$ is between parts $V_2$ and $V_3$.
	Now, let $ v_i\in V_i $, $ 1\leq i\leq 3 $. By definition of $\sigma^{ven}$, we have
	\begin{align*}
	\sigma^{ven}(v_1) &=1+\big(2t+2(n-2t)\big)+\big(t+(n-2t)\big)=3n-3t+1,\\
	\sigma^{ven}(v_2)&=1+\big(2t+(n-2t)\big)+\big(t+(n-2t)\big)=2n-t+1,\\
	\sigma^{ven}(v_3)
	&=1+3t+2t=5t+1.
	\end{align*}
	Note that the assumptions $ |V_1|= |V_2| =t $ and $ |V_3|\notin\{t,\frac{2t}{3}\} $ concludes that $ \big|\{3n-3t+1, 2n-t+1, 5t+1\}\big|=3 $. Hence, this coloring is proper.\\
	
	\textbf{Case 2.}
	$ |V_1|= |V_2| =t $ and $ |V_3|=\frac{2t}{3} $.\\
	In this case, let
	\begin{equation*}
	w(e)=
	\left\{
	\begin{array}{cl}
	2 & \ \ \ e\in [V_1,V_2],  \\
	1 & \ \ \ \text{otherwise,}
	\end{array}
	\right.
	\qquad and \qquad
	w(v)=
	\left\{
	\begin{array}{cl}
	2 & \ \ \ v\in V_1,  \\
	1 & \ \ \ \text{otherwise.}
	\end{array}
	\right.
	\end{equation*}
	For $ v_i\in V_i $, $ 1\leq i\leq 3 $,
	\begin{align*}
	\sigma^{ven}(v_1)&=2+(2t+\frac{2t}{3})+(t+\frac{2t}{3})=4t+\frac{t}{3}+2,\\
	\sigma^{ven}(v_2)&=1+(2t+\frac{2t}{3})+(2t+\frac{2t}{3})=5t+\frac{t}{3}+1,\\
	\sigma^{ven}(v_3)&=1+2t+3t=5t+1.
	\end{align*}
	Since $ t, \frac{2t}{3}\in \mathbb{N} $, every pair of adjacent vertices receive different colors.
\end{proof}

\section{Main results for hypergraphs}\label{hgraphs}
In this section, first some preliminary definitions will be given and then a number of results will be presented for hypergraphs.

Let $\mathcal{H}$ be a hypergraph.
For  a vertex $v \in V(\mathcal{H})$,\textit{ degree }of $v$ in $\mathcal{H}$, denoted by $deg_\mathcal{H}(v)$, is the number of edges in $\mathcal{H}$ containing $v$. We say that $\mathcal{H}'$ is a subhypergraph of $\mathcal{H}$, if $V(\mathcal{H}')\subseteq V(\mathcal{H})$ and $E(\mathcal{H}')\subseteq E(\mathcal{H})$. A subhypergraph $\mathcal{H}'$ of $\mathcal{H}$ is called spaning, whenever  $V(\mathcal{H}')= V(\mathcal{H})$.

\subsection{Complete hypergraphs}
An $r$-uniform hypergraph is called \textit{$n$-partite} if there is a partition of the vertex set into $n$ parts such that each edge
has at most  one vertex in each part.
A {\it complete $r$-uniform  hypergraph} of
order $n$, denoted by { $\mathcal{K}_n^r$}, is a hypergraph  consisting of
all the $r$-subsets of  vertex set $V$ of cardinality $n$.

\begin{theorem}\label{complete}
	Let $ r\geq 3 $ and $\mathcal{K}_n^r(t) $ be the complete $ r$-uniform $ n$-partite  hypergraph on vertex sets  $ V_1,\ldots, V_n $ such that  $\vert V_i\vert = t$, for  $1\leq i\leq n$. If $ n>2(r-1)^2 $, then $ \chi^{e}(\mathcal{K}_n^r(t))=2 $.
\end{theorem}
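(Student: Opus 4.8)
The plan is to prove the two bounds $\chi^e(\mathcal K_n^r(t))\ge 2$ and $\chi^e(\mathcal K_n^r(t))\le 2$ separately. For the lower bound, note first that $\mathcal K_n^r(t)$ is regular: a vertex $v\in V_i$ lies in exactly $\binom{n-1}{r-1}t^{r-1}$ edges, since an edge through $v$ is determined by a choice of $r-1$ of the other parts together with one vertex in each. Hence the constant weighting $w\equiv 1$ gives $\sigma^e\equiv\binom{n-1}{r-1}t^{r-1}$, which is not proper because every edge (having $r\ge 2$ vertices) is then monochromatic. So one weight never suffices and $\chi^e(\mathcal K_n^r(t))\ge 2$.

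For the upper bound I would use a weighting that depends only on the \emph{pattern} of an edge, that is, on the set $P(e)\subseteq[n]$ of the $r$ parts it meets (note $|P(e)|=r$, since $e$ meets each part at most once). Fix an $r$-uniform hypergraph $\mathcal F$ on the vertex set $[n]$ of parts and put $w(e)=2$ when $P(e)\in E(\mathcal F)$ and $w(e)=1$ otherwise. The merit of this choice is that $\sigma^e$ becomes constant on each part: for $v\in V_i$,
\[
\sigma^e(v)=\sum_{e\ni v}w(e)=\binom{n-1}{r-1}t^{r-1}+t^{r-1}\,d_{\mathcal F}(i),
\]
where $d_{\mathcal F}(i)$ is the degree of $i$ in $\mathcal F$, because the weight-$2$ edges through $v$ are exactly those whose pattern is $\{i\}\cup S$ with $\{i\}\cup S\in E(\mathcal F)$, and there are $t^{r-1}$ of them for each of the $d_{\mathcal F}(i)$ admissible $(r-1)$-sets $S$. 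Thus the colour of a vertex depends only on the $\mathcal F$-degree of its part, and an edge $e$ fails to be properly coloured exactly when all $r$ parts in $P(e)$ have the same $\mathcal F$-degree. Since every $r$-subset of $[n]$ arises as some $P(e)$ (pick one vertex per part), the weighting is proper iff no $r$ parts share an $\mathcal F$-degree, i.e.\ iff every degree value of $\mathcal F$ is attained at most $r-1$ times.

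It therefore remains to construct a simple $r$-uniform hypergraph $\mathcal F$ on $n$ vertices in which no degree value has multiplicity $\ge r$; the cleanest target is an $\mathcal F$ with $n$ pairwise distinct degrees. I would build $\mathcal F$ from a spread-out degree sequence: split $[n]$ into blocks of size at most $r-1$ and prescribe a single, strictly increasing degree to each block (for instance the arithmetic values $0,c,2c,\dots$), so that each value is used at most $r-1$ times by construction, and then realize this sequence by a simple $r$-uniform hypergraph. This realization is where the hypothesis $n>2(r-1)^2$ enters: it guarantees more than $2(r-1)$ blocks, which is the room needed for the prescribed sequence to satisfy the Gale--Ryser-type realizability inequalities and to be realized \emph{without repeating an edge}. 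I expect this last step --- choosing the degrees, checking realizability under $n>2(r-1)^2$, and ensuring the edges are distinct --- to be the main obstacle, while the regularity argument and the pattern-hypergraph reduction are routine once set up.
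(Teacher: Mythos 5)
Your lower bound and your reduction are both correct, and they in fact mirror the paper's strategy exactly: the paper's weight-$2$ subhypergraph $\mathcal{H}^*$ is precisely a ``pattern'' hypergraph in your sense (for each chosen $r$-set of parts it takes \emph{all} edges with that pattern), and its key observation is the same pigeonhole you state --- an edge meets $r$ distinct parts, so it suffices that no $\mathcal{F}$-degree value is shared by $r$ parts. The genuine gap is the final step, which you yourself flag as the main obstacle: you never construct $\mathcal{F}$. Appealing to ``Gale--Ryser-type realizability inequalities'' does not close it, because no such characterization exists for $r$-uniform hypergraphs with $r\geq 3$; deciding whether a prescribed sequence is the degree sequence of a $3$-uniform hypergraph is in fact NP-complete, and you do not even verify the basic necessary conditions for your target sequence (e.g.\ that the degree sum $0+c+2c+\cdots$, with block multiplicities, is divisible by $r$). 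So the step you defer to a standard tool is exactly the step where no standard tool is available, and it is also the only place where the hypothesis $n>2(r-1)^2$ actually gets consumed. As written, the proof is incomplete.

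The repair is an explicit construction rather than a realizability theorem, and this is what the paper's proof consists of. Write $n=(r-1)p+q$ with $0\leq q\leq r-2$, and group the parts into blocks $\mathcal{A}_1,\dots,\mathcal{A}_p$ of $r-1$ parts each plus a leftover block $\mathcal{A}_0$ of $q$ parts (your blocks of size at most $r-1$). Now take the edges of $\mathcal{F}$ to be $r$-sets of the form ``one part from a high-indexed block together with all $r-1$ parts of another block,'' with index ranges chosen separately for $p$ even and odd. Because every edge is assembled as one vertex plus a whole block, $\mathcal{F}$ is automatically a legitimate simple $r$-uniform hypergraph --- the degrees are \emph{computed}, not prescribed, so no realizability question ever arises. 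A direct count shows the $\mathcal{F}$-degree is constant on each block and strictly increasing with the block index, with $\mathcal{A}_0$ at degree $0$; the hypothesis $n>2(r-1)^2$ (i.e.\ $p$ large relative to $r$) is what keeps the degrees of the ``low'' blocks strictly below those of the ``high'' blocks. Each degree value is then attained by at most $r-1$ parts, and your own pigeonhole criterion finishes the proof. If you replace your last paragraph by such a construction, your argument becomes complete and essentially coincides with the paper's.
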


\begin{proof}	
	Let $\mathcal{H}=\mathcal{K}_n^r(t)$. We specify a subhypergraph $ \mathcal{H}^*\subset \mathcal{H} $ such that assigning weight $ 2 $ to the edges of $ \mathcal{H}^* $ and weight $ 1 $ to the remaining edges of $ \mathcal{H} $  yields a proper vertex coloring for $ \mathcal{H} $.
	In the other words, every arbitrary edge in $\mathcal{H} $ has two vertices with different colors. Since $ \mathcal{H} $ is regular and for every vertex $ u \in V(\mathcal{H}) $ we have, $$ \sigma ^{e}(u)=\sum_{e\ni u}w(e)=2deg_{\mathcal{H}^*}(u)+deg_{\mathcal{H}\backslash \mathcal{H}^*}(u)=deg_{\mathcal{H}}(u)+deg_{\mathcal{H}^*}(u), $$
	it suffices to demonstrate  that every edge in $ \mathcal{H} $ contains two vertices $ u $ and $ u' $ such that
	$ deg_{\mathcal{H}^*}(u)\neq deg_{\mathcal{H}^*}(u') $.
	Suppose that $ n=(r-1)p+q $, where $ 0\leq q\leq r-2 $. Now, partition vertex sets $ V_1,\ldots, V_n $  into  $ p $  disjoint classes $ \mathcal{A}_1, \mathcal{A}_2\ldots, \mathcal{A}_p $ each of  size $ r-1 $ and a possible class $ \mathcal{A}_0 $ of size $ q $, i.e.
	\begin{align*}
	\mathcal{A}_0:&=\{V_1^0, V_2^0, \ldots, V_q^0\},\\
	\mathcal{A}_j:&=\{V^j_1, V^j_2, \ldots, V^j_{r-1}\}, \qquad 1\leq j\leq p.
	\end{align*}
	Note that $ \mathcal{A}_0 $ is empty, whenever $ q=0 $. Hereafter, we handle the problem in the two cases based on parity of $ p $. \\
	
	\textbf{Case  1.} $ p=2k $ for some integer $ k $.\\
	Let $ E(\mathcal{H}^*)=\bigcup_{j=1}^{k}E_j $, where each
	$ E_j $ is a collection of edges defined as follows. Note that by $ x\in \mathcal{A}_h $ we mean that $ x $ is in a $ t $-tuple of $ \mathcal{A}_h $.
	\begin{align*}
	E_j:=\bigg\{ (x, y_1, y_2,\ldots, y_{r-1}) \big|\  &x\in \mathcal{A}_{k+j},\  (y_1,y_2,\ldots, y_{r-1})\in V^s_{1}\times V^s_{2}\times \cdots\times V^s_{r-1},\\
	& k-j+1\leq s\leq 2k,\ s\neq k+j
	\bigg\}.
	\end{align*}	
	Now, we show that $ \mathcal{A}_i $'s have different degrees in $ \mathcal{H}^* $. Since every edge in $ \mathcal{H} $ has size $ r $, it contains two vertices of different $ \mathcal{A}_i $'s.  Therefore, every edge in $ \mathcal{H} $ contains two vertices with different degrees in $ \mathcal{H}^* $.
	Let $ z\in \mathcal{A}_{j} $ and $ z'\in \mathcal{A}_{k+j'} $, for some $ 1\leq j,j'\leq k $. Then
	\begin{align}
	deg_{\mathcal{H}^*}(z)&=(r-1)jt^{r-1},\label{eq:dAj}\\
	deg_{\mathcal{H}^*}(z')&=\big[
	2k-(
	k-j'+1
	)
	\big]t^{r-1}
	+
	(k-1)
	(r-1)t^{r-1}\nonumber\\
	&=\big(kr-r+j'\big)t^{r-1}.\label{eq:dAj'}
	\end{align}
	If $ z\in \mathcal{A}_k $ and $ z'\in \mathcal{A}_{k+1} $, then the assumptions
	$ n>2(r-1)^2 $ and $ \lfloor\frac{n}{r-1}=2k\rfloor $ conclude that
	\begin{align}\label{eq:dA nesf}
	deg_{\mathcal{H}^*}(z)=(r-1)kt^{r-1}<(kr-r+1)t^{r-1}=deg_{\mathcal{H}^*}(z').
	\end{align}
	For every $ z''\in \mathcal{A}_0 $, $ deg_{\mathcal{H}^*}(z'')=0 $.  On the other hand, by relations
	\eqref{eq:dAj}, \eqref{eq:dAj'} and \eqref{eq:dA nesf}, it is clear that
	$ deg_{\mathcal{H}^*}(u) $ is monotone increasing in $ j $, for every arbitrary vertex $ u\in \mathcal{A}_j $,  $ 0\leq j \leq p $.  It completes the proof when $ p $ is even.\\

	\textbf{Case  2.} $ p=2k+1 $ for some integer $ k $.\\
	The proof is similar to what mentioned above.
	In this case, $ E(\mathcal{H}^*)=\bigcup_{j=1}^{k+1}E_j $, where
	\begin{align*}
	E_j:=\bigg\{ (x, y_1, y_2,\ldots, y_{r-1}) \big|\  &x\in \mathcal{A}_{k+j},\  (y_1,y_2,\ldots, y_{r-1})\in V^s_{1}\times V^s_{2}\times \cdots\times V^s_{r-1},\\
	& k-j+2\leq s\leq 2k+1,\ s\neq k+j
	\bigg\}.
	\end{align*}
	Let $ z\in \mathcal{A}_{j} $ and $ z'\in \mathcal{A}_{k+j'} $, for some $ 1\leq j\leq k $ and $ 1\leq j'\leq k+1 $. Then
	\begin{align}
	deg_{\mathcal{H}^*}(z)&=(r-1)jt^{r-1},\label{eq:dAj1}\\
	deg_{\mathcal{H}^*}(z')&=
	\big[2k+1-(
	k-j'+2
	)\big]t^{r-1}
	+
	k
	(r-1)t^{r-1}\nonumber\\
	&=\big(kr+j'-1\big)t^{r-1}.\label{eq:dAj'1}
	\end{align}
	For $ z\in \mathcal{A}_k $ and $ z'\in \mathcal{A}_{k+1} $, we have
	\begin{align}\label{eq:dA nesf1}
	deg_{\mathcal{H}^*}(z)=(r-1)kt^{r-1}<(kr+1-1)t^{r-1}=deg_{\mathcal{H}^*}(z').
	\end{align}
	By relations
	\eqref{eq:dAj1}, \eqref{eq:dAj'1} and \eqref{eq:dA nesf1},
	for every arbitrary vertex $ u\in \mathcal{A}_j $, $ deg_{\mathcal{H}^*}(u) $ is monotone increasing in $ j $, $ 0\leq j \leq 2k+1 $, that completes the proof for this case, as well.
\end{proof}

\begin{cor}
	Let $ n>2(r-1)^2 $ and $r\geq 3$. Then, $ \chi^{e}(\mathcal{K}_n^r(t))=\chi^{ve}(\mathcal{K}_n^r(t))= \chi^{ven}(\mathcal{K}_n^r(t))=2. $ Also, for $n\geq r+1$ and $r\geq 3$,
	$ \chi^{e}(\mathcal{K}_n^r)= \chi^{ve}(\mathcal{K}_n^r)=\chi^{ven}(\mathcal{K}_n^r)=2. $
\end{cor}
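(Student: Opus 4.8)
The plan is to reduce all six equalities to the single statement $\chi^{e}=2$ in each of the two settings, exploiting that both $\mathcal{K}_n^r(t)$ and $\mathcal{K}_n^r$ are vertex-transitive, hence regular and \emph{neighbour-regular}: every vertex has the same open-neighbourhood size, namely $(n-1)t$ for $\mathcal{K}_n^r(t)$ and $n-1$ for $\mathcal{K}_n^r$ (two vertices of the same part are never in a common edge). First I would record a lifting observation valid for any such hypergraph $\mathcal{H}$. If $w_{E}$ is an edge weighting with weights in $\{1,2\}$ inducing a proper colouring, then setting $w(v)=1$ for every vertex and $w(e)=w_{E}(e)$ gives $\sigma^{ve}(v)=1+\sigma^{e}(v)$ and $\sigma^{ven}(v)=1+\sigma^{e}(v)+|N(v)|$. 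Since $|N(v)|$ is the same for every vertex, both $\sigma^{ve}$ and $\sigma^{ven}$ differ from $\sigma^{e}$ by a vertex-independent constant, hence are proper whenever $\sigma^{e}$ is; taking a weighting witnessing $\chi^{e}=2$ therefore yields $\chi^{ve},\chi^{ven}\le 2$. This is the step that makes the corollary ``straightforward'' once $\chi^{e}$ is controlled.

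For the matching lower bounds I would use the only available $1$-weighting, $w\equiv 1$: then $\sigma^{e}(v)=\deg_{\mathcal{H}}(v)$, $\sigma^{ve}(v)=1+\deg_{\mathcal{H}}(v)$ and $\sigma^{ven}(v)=1+\deg_{\mathcal{H}}(v)+|N(v)|$ are all constant by regularity and neighbour-regularity, so every edge (which has $r\ge 3\ge 2$ vertices) is monochromatic and none of the three colourings can be proper with a single weight; thus $\chi^{e},\chi^{ve},\chi^{ven}\ge 2$. Combined with the lifting step, everything now reduces to exhibiting one proper edge weighting with weights in $\{1,2\}$ in each setting.

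For $\mathcal{K}_n^r(t)$ with $n>2(r-1)^2$ this is exactly Theorem \ref{complete}, so the first line of the corollary is immediate. For $\mathcal{K}_n^r$ I would note $\mathcal{K}_n^r=\mathcal{K}_n^r(1)$ and give a direct construction valid for all $n\ge r+1$, thereby covering the weaker hypothesis and not only $n>2(r-1)^2$. As in the proof of Theorem \ref{complete}, writing $\sigma^{e}(v)=\deg_{\mathcal{H}}(v)+\deg_{\mathcal{H}^*}(v)$ with $\deg_{\mathcal{H}}$ constant and using that every $r$-subset of $V$ is an edge, the weighting (weight $2$ on $\mathcal{H}^*$, weight $1$ elsewhere) is proper if and only if no $r$ vertices share a $\deg_{\mathcal{H}^*}$-value, i.e. every level set of $\deg_{\mathcal{H}^*}$ has size at most $r-1$. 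I would realise such a degree sequence as follows: split $V$ into consecutive blocks $B_1,\dots,B_s$ of size $r-1$ (with $s=\lceil n/(r-1)\rceil$ and the last block possibly smaller), and for each pair $i<j$ place into $\mathcal{H}^*$ all edges consisting of the full block $B_i$ together with one vertex of $B_j$ (these are genuine, distinct $r$-sets since $r\ge 3$). A short count shows $\deg_{\mathcal{H}^*}$ is constant on each block and strictly monotone across blocks, the consecutive gap being $r-2\ge 1$, so the colour classes are precisely the blocks and each has size at most $r-1$.

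The single point that needs care — and the only genuine obstacle — is the residue case $n\equiv 1\pmod{r-1}$, where the last block is a lone vertex: its degree then coincides with that of the penultimate block, producing a monochromatic $r$-set. I would dispose of this by applying the block construction to $n-1$ vertices (now divisible by $r-1$, so all blocks have size $r-1$ and receive degrees $\ge 1$) and leaving the remaining vertex isolated in $\mathcal{H}^*$: its degree $0$ is distinct from every block degree, its colour class is a singleton, and since only one vertex is left out no monochromatic edge can form. This settles all residues, yields $\chi^{e}(\mathcal{K}_n^r)=2$ for every $n\ge r+1$, and with the lifting and lower-bound steps completes the corollary.
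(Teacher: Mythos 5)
Your proposal is correct and follows essentially the same route as the paper: lower bounds from regularity and neighbour-regularity, lifting a $\{1,2\}$-edge weighting to a total weighting by giving every vertex weight $1$, invoking Theorem \ref{complete} for $\mathcal{K}_n^r(t)$, and for $\mathcal{K}_n^r$ partitioning the vertices into blocks of size $r-1$ and letting $\mathcal{H}^*$ consist of a full block plus one vertex from a later block, so that $\deg_{\mathcal{H}^*}$ is constant on blocks, distinct across blocks, and every level set has size less than $r$. The only difference is cosmetic: the paper never attaches vertices of the remainder class $\mathcal{A}_0$ to earlier blocks (they stay isolated in $\mathcal{H}^*$, so no case distinction on $n \bmod (r-1)$ arises), whereas your construction uses them as attached vertices and therefore needs your (valid) patch for the residue $n \equiv 1 \pmod{r-1}$.
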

\begin{proof}
	Regularity of hypergraph $ \mathcal{K}_n^r(t) $ implies that $ \chi^{e}(\mathcal{K}_n^r(t))$ and $ \chi^{ve}(\mathcal{K}_n^r(t))$ are strictly greater than one. Also, $ \chi^{ven}(\mathcal{K}_n^r(t))> 1$, since $ |N_{\mathcal{K}_n^r(t)}(u)|=|N_{\mathcal{K}_n^r(t)}(v)|$, for every $ u, v \in V({\mathcal{K}_n^r(t)}) $.
	
	Let  $ w: E(\mathcal{K}_n^r(t))\rightarrow \{1,2\} $ be the edge weighting which yields a proper vertex coloring. This function exists by Theorem \ref{complete}. It can be generalized to a total weighting function $ w'$ by setting  $w'(v)=1$ for every vertex $v$, and $w'(e)=w(e)$, for all $ e\in E({\mathcal{K}_n^r(t)}) $.
	Hence,  $ \chi^{ven}({\mathcal{K}_n^r(t)}),\chi^{ve}({\mathcal{K}_n^r(t)})\leq\chi^{e}({\mathcal{K}_n^r(t)}) $. By Theorem \ref{complete}, $\chi^{ven}({\mathcal{K}_n^r(t)})=\chi^{ve}({\mathcal{K}_n^r(t)})=2 $.

	However, Theorem \ref{complete} yields $ \chi^{e}(\mathcal{K}_n^r)=2 $ for $ n>2(r-1)^2 $, in what follows we bring an argument which shows by a combination of this theorem and the previous argument we have $ \chi^{ve}({\mathcal{K}_n^r})=\chi^{ven}({\mathcal{K}_n^r})=2$, for $ n\geq r+1 $ and $ r\geq 3 $.
	
	To see this, we construct a hypergraph $ {\cal H}^*\subseteq{\mathcal{K}_n^r} $ such that for every selection of $ r $ vertices from $ V({\mathcal{K}_n^r}) $, always there exist $ i $ and $ j $, $ 1\leq i,j\leq r $, where $ \deg_{{\cal H}^*}(v_i)\neq\deg_{{\cal H}^*}(v_j) $. Now, we assign weight $ 2 $ to edge $ e $, if $ e\in E({\cal H}^*) $ and define weight $ e $ to be $ 1 $ for the remaining edges of $ \mathcal{K}_n^r $. This total weighting function yields the desired vertex coloring.
	
	The configuration of $ H^* $ is as follows. Assume that if $ n $ divided by $ r-1 $ equals $ p $ with reminder $ q $, i.e.  $ n=p(r-1)+q $. We define partition $ {\cal A}_0, {\cal A}_1, \ldots, {\cal A}_p $ of the vertex set $ V({\mathcal{K}_n^r}) $, where
	$ {\cal A}_i=\{x_1^i,x_2^i,\ldots, x_{r-1}^i\} $, $ 1\leq i\leq p $ and $ {\cal A}_0=\{x_1^0,x_2^0,\ldots, x_{q}^0\} $.
	First, suppose that $ p=1 $. By the assumption $ n\geq r+1 $, we have $ q\geq 2 $. In this case, let
	$ E({\cal H}^*)=\{{\cal A}_1\cup{x_k^0} \ | \ 1\leq k\leq q\} $. Note that if $ v\in {\cal A}_1 $, then $ \deg_{{\cal H}^*}(v)=q\geq 2 $ and for $ v\in {\cal A}_0 $, $ \deg_{{\cal H}^*}(v)=1 $. Clearly, in this way there is no $ r $-tuple of vertices with a same degree in $ {\cal H}^* $.
	Hence, we may assume that $ p\geq2 $. The edge set of $ {\cal H}^* $ is defined by $ E({\cal H}^*)=\bigcup_{i=1}^{p-1}\big\{{\cal A}_i\cup{x_k^j} \ | \ i+1\leq j\leq p, 1\leq k\leq r-1\big\}  $. Therefore,
	$$ \deg_{{\cal H}^*}(v)=
	\left\{
	\begin{array}{ll}
	(i-1)+(r-1)(p-i) & \ \ \ v\in {\cal A}_i, 1\leq i\leq p \\
	0 & \ \ \ v\in {\cal A}_0.
	\end{array}
	\right.
	$$
	It is readily to check that $ \deg_{{\cal H}^*}(v)\neq\deg_{{\cal H}^*}(u) $ for $ u\in {\cal A}_j $ and $ v\in {\cal A}_{j'} $, $ j\neq j' $ which completes the proof.
\end{proof}

\subsection{Paths and Cycles}
Let $t,r$ and $ n $ be integers, where $1\leq t <r$. An $r$-uniform $t$-tight path of order $n$, denoted by   $ \mathcal{P}_{n,t}^{(r)}, $ is a hypergraph with vertex set $[n]=\{1,2,\ldots, n\}$ and edge set $\{e_1,e_2,\ldots ,e_{\ell}\}$, such that ${\ell}=\frac{n-t}{r-t}$ is an integer and for $1\leq i\leq \ell$, $e_i=\{ (i-1)(r-t)+
1, (i - 1)(r - t) + 2, \dots , (i - 1)(r - t) + r\}$. Note that the edges are intervals
of length $r$ in $[n]$ and consecutive edges intersect in exactly $t$ vertices. Whenever $t=1$,  $ \mathcal{P}_{n,1}^{(r)} $
commonly referred as a loose path. Also, a $ \mathcal{P}_{n,r-1}^{(r)} $  is called a tight path.
An $r$-uniform $t$-tight cycle of order $n$, denoted by   $ \mathcal{C}_{n,t}^{(r)} $, is a hypergraph with vertex set $[n]$ and edge set $\{e_1,e_2,\ldots ,e_{\ell}\}$, where ${\ell}=\frac{n}{r-t}$ is an integer and for $1\leq i\leq \ell$, $e_i =\{ (i-1)(r-t)+
1, (i - 1)(r - t) + 2, \dots , (i - 1)(r - t) + r\}$. Note that here the index of $e_i$'s is considered on module $n$ and every two consecutive edges have exactly $t$ vertices in common. Cycles $ \mathcal{C}_{n,1}^{(r)} $ and  $ \mathcal{C}_{n,r-1}^{(r)} $  are called  loose  and tight cycles, respectively. The graphs path, $P_n$, and cycle, $C_n$,  are special cases of $ \mathcal{P}_{n,t}^{(r)} $ and $ \mathcal{C}_{n,t}^{(r)} $, whenever $r=2$ and $t=1$. Hence, a restriction of every result for hypergraphs concludes an statement for graphs.
By notation $e=\{x_1,x_2,\ldots, x_r\}$ for an edge $e$, we mean that there is a total ordering on  $x_i$'s, i.e.  $x_i<x_{i+1}$ for every $1\leq i \leq r-1$.

In the two following theorems we characterize $\chi^{e}(\mathcal{P}_{n,t}^{(r)})$ and $\chi^{e}(\mathcal{C}_{n,t}^{(r)})$  for every $n$, $r$ and $t$.

\begin{theorem}\label{path}
	Let $\mathcal{P}=\mathcal{P}_{n,t}^{(r)} $ be an $ r$-uniform $ t$-tight path of order $n$ and  length $ \ell>2 $. Then,
	\begin{equation*}
	\chi^{e}(\mathcal{P})=
	\left\{
	\begin{array}{cl}
	2 & \ \ \ t=\frac{r}{2}\ or\  t>\frac{r}{2},\  \text{r-t}\  divides \  \text{r}\ and\  \ell \geq 2\frac{r}{r-t}-1,  \\
	1 & \ \ \ \text{otherwise.}
	\end{array}
	\right.
	\end{equation*}	
\end{theorem}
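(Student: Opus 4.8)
The plan is to let $s:=r-t$ be the step (so $e_i=\{(i-1)s+1,\dots,(i-1)s+r\}$) and to reduce the whole problem to the divisibility of $r$ by $s$, using a twin phenomenon. Group the vertices into consecutive blocks $B_j:=\{(j-1)s+1,\dots,js\}$. Any two vertices in the same block lie in exactly the same edges, hence have equal $\sigma^e$ under every weighting; so an edge is properly coloured iff two of its blocks receive distinct block-sums $S(B_j):=\sum_{e_i\ni B_j}w(e_i)$. A short computation gives, for an interior vertex $v$, $\deg(v)=\gamma+1-[\beta>\rho]$, where $\gamma=\lfloor(r-1)/s\rfloor$, $\rho=(r-1)\bmod s$, $\beta=(v-1)\bmod s$; thus the interior degree is the constant $d:=r/s$ exactly when $s\mid r$ (then $\rho=s-1$), and otherwise oscillates between $\gamma$ and $\gamma+1$. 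When $s\mid r$ the block picture is especially clean: each $e_i$ is the union of the $d$ consecutive blocks $B_i,\dots,B_{i+d-1}$, so for colouring purposes $\mathcal P$ is a $d$-uniform tight path on the $N:=\ell+d-1$ super-vertices $B_1,\dots,B_N$. Throughout I use the hypothesis $\ell>2$.

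For the rows giving value $1$ I would show that the constant weighting $w\equiv1$ is already proper, i.e. every edge carries two distinct degrees. If $s\nmid r$ (which is forced when $t<r/2$, since then $s>r/2$ cannot divide $r$), then $\rho\le s-2$, the interior degree genuinely attains both $\gamma$ and $\gamma+1$, and every maximal run of a constant degree has length at most $s<r$: on the two monotone ramps $\deg(v)=\lfloor(v-1)/s\rfloor+1$ holds each value over at most $s$ vertices, and in between the degree alternates in runs of lengths $\rho+1$ and $s-1-\rho$. Hence no interval of $r$ consecutive vertices is monochromatic. In the remaining row, $s\mid r$ with $\ell\le 2d-2$, I pass to the block tight path: the degree sequence $D(B_j)$ is unimodal with a central plateau at value $\min(d,\ell)$ of length $|d-\ell|+1$, and an edge (being $d$ consecutive blocks) is monochromatic only if it fits inside this plateau; since $|d-\ell|+1\ge d$ forces $\ell\le1$ or $\ell\ge2d-1$, no edge is monochromatic and $\chi^e=1$.

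For the rows giving value $2$ we are in the case $s\mid r$ with $\ell\ge2d-1$ (note $d\ge2$, with $d=2$ exactly when $t=r/2$). The lower bound $\chi^e\ge2$ follows from the same plateau: its length $\ell-d+1\ge d$ now lets an edge sit entirely inside the constant-degree region, so $w\equiv1$ fails. For the matching upper bound I would exhibit an explicit $\{1,2\}$-weighting through the block reduction by assigning $w(e_i)=2$ precisely when $\lfloor(i-1)/d\rfloor$ is even — alternating runs of $d$ heavy and $d$ light edges. For every interior super-vertex this yields $S(B_{j+1})-S(B_j)=w(e_{j+1})-w(e_{j-d+1})\ne0$, because the indices $j+1$ and $j-d+1$ always fall in complementary runs; thus consecutive blocks of every interior edge receive different sums, while near each end the block-sums are strictly monotone (e.g.\ $S(B_j)=2j$ for $j\le d$, and symmetrically at the right end), so the ramp edges stay properly coloured. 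This gives $\chi^e(\mathcal P)=2$.

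The genuinely delicate step is the construction in the last paragraph: one must produce a single $\{1,2\}$-pattern that breaks every constant-degree interior window — equivalently, that avoids a run of $d-1$ consecutive period-$d$ agreements in the weight sequence — without inadvertently making two block-sums coincide across the interface between a ramp and the plateau. The alternating-run pattern is tailored to force $S(B_{j+1})\ne S(B_j)$ throughout the interior; the remaining work is the boundary bookkeeping, namely verifying the $O(d)$ super-vertices where the sliding window is clamped and checking that $\ell\ge 2d-1$ indeed leaves room for at least one complete heavy (and light) run. By contrast, the degree computation and the ``no run of length $r$'' estimate are routine once the block/twin viewpoint has been set up.
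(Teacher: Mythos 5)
Your proposal is correct in substance and ends up with the same key construction as the paper, but it is organized along a genuinely different (and more unified) route. The paper argues case by case: for $t<r/2$ it exhibits in each edge an explicit vertex of degree $1$ and one of degree $2$; for $t=r/2$ it forces $w(e_i)\neq w(e_{i+2})$ and reduces to the four patterns $1122\cdots$, $1221\cdots$, $2112\cdots$, $2211\cdots$; for $t>r/2$ with $(r-t)\mid r$ it uses exactly your alternating-runs-of-$d$ weighting; the remaining rows are handled by ad hoc degree observations. Your block/super-vertex reduction merges the two $\chi^e=2$ cases into one: when $s=r-t$ divides $r$ the path genuinely is a $d$-uniform tight path on blocks, your weighting specializes at $d=2$ to the paper's pattern $2211\cdots$, and your verification via $S(B_{j+1})-S(B_j)=w(e_{j+1})-w(e_{j-d+1})\neq 0$ together with the monotone ramps is sound. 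I checked the boundary bookkeeping you deferred, and it closes: an edge $e_i$ with $i\le d-1$ contains two blocks of the strictly increasing left ramp, an edge with $d\le i\le \ell-1$ contains an interior consecutive pair, and $e_\ell$ contains two blocks of the strictly decreasing right ramp (this uses $\ell\ge 2d-1\ge d+1$). What your viewpoint buys is one construction and one verification in place of the paper's two; what the paper's pattern-forcing argument buys (and yours does not give) is the classification of \emph{all} valid $2$-weightings when $d=2$.

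Two of your statements need repair, though neither sinks the argument. First, ``any two vertices in the same block lie in exactly the same edges'' is false when $s\nmid r$: for $r=5$, $t=3$, $\ell=3$, vertices $5$ and $6$ both lie in $B_3$ but have degrees $3$ and $2$. The twin property holds precisely when $s\mid r$, which is the only place you use it, so the claim should be stated under that hypothesis. Second, in the $s\nmid r$ case your bound ``every maximal run of a constant degree has length at most $s$'' fails for short paths in which the two ramps overlap: for $r=10$, $t=7$, $s=3$, $\ell=3$ the degree sequence is $1,1,1,2,2,2,3,3,3,3,2,2,2,1,1,1$, whose central run has length $r-(\ell-1)s=4>s$. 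The conclusion survives because every maximal run still has length strictly less than $r$ (the central plateau has length $r-(\ell-1)s\le r-2s<r$ once $\ell\ge 3$), and that weaker bound is all the sliding-window argument needs; so replace ``at most $s$'' by ``less than $r$'' and justify the tent-shaped case separately.
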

\begin{proof}
	Let $ \cal P $ be the path on edges $ e_1, e_2, \ldots, e_{\ell} $, where $ \ell>2 $.
	Assume that $ t<\frac{r}{2} $ and  $ e_i=\{x_1,\ldots, x_r\} $  is an arbitrary edge. If $i\neq \ell$, then $ 1=\deg_{\cal P}(x_{r-t})<\deg_{\cal P}(x_r)=2 $. Also, for edge $ e_\ell $, we have $ 1=\deg_{\cal P}(x_{t+1})<\deg_{\cal P}(x_1)=2 $. Therefore, every edge of $ \cal P $ contains two vertices of different degrees and $ \chi^{e}({\cal P})=1 $.
	
	Let $ t=\frac{r}{2} $. Since degree of all the vertices in $ \cup^{\ell-1}_{i=2} e_i $ is two, $ \chi^e({\cal P})>1.$
	Now, suppose that $ w:E(P)\rightarrow\{1,2\} $ is an edge weighting which induces a proper vertex coloring.
	Clearly, $ w(e_i)\neq w(e_{i+2}) $, since otherwise, no matter what is $ w(e_{i+1}) $, all the vertices of $ e_{i+1} $ receive a same color.
	Hence, every weight assignment to an edge forces weight of alternative edges on the path. Thus, it is enough to weight two consecutive edges on the path.
	This fact implies that the only possible patterns to weight edges of a path are $ 1122\cdots $, $ 1221\cdots $, $ 2112\cdots $ and $ 2211\cdots $. Clearly, these weighting patterns yield a proper vertex coloring for $ \cal P $.  Consequently, $ \chi^e({\cal P})=2.$
	
	Now, let $ t>\frac{r}{2} $ and set $ k=\lfloor\frac{r}{r-t}\rfloor $. First assume that $ k=\frac{r}{r-t} $ is an integer and  $ \ell\geq 2k-1 $. In this case, for every vertex $ x\in e_{i} $, $k\leq i\leq \ell-k+1 $,  we have $ \deg_{\cal P}(x)=k $, thus  $  \chi^e({\cal P})>1  $. On the other hand, the following edge weighting function shows $  \chi^e({\cal P})=2  $. For $ j\geq0 $, let
	\begin{equation*}
	w(e_{jk+1})=w(e_{jk+2})=\cdots=w(e_{jk+k})=
	\left\{
	\begin{array}{cl}
	1 & \ \ \ j \ \text{is even,}  \\
	2 & \ \ \ j \ \text{is odd.}
	\end{array}
	\right.
	\end{equation*}
	Now, we suppose that $\frac{r}{r-t} $ is not an integer and $ \ell\geq 2k-1 $. Thus, $ r=k(r-t)+b_0 $, where $ b_0>0 $.
	It is easy to check that if $ 1\leq i\leq k $ or $ \ell-k+1\leq i\leq\ell $, then $ e_i $ contains two vertices with different degrees.
	On the other hand, for $ e_i=\{x_1, x_2, \ldots, x_r\} $, $ k+1\leq i\leq \ell-k  $,  we have $ k=\deg_{\cal P}(x_{r-b_0})<\deg_{\cal P}(x_r)=k+1 $.
	It concludes that
	$  \chi^e({\cal P})=1  $. Finally, for the case $ \ell<2k -1 $,  every edge of $\mathcal{P}$ contains two vertices with distinct degrees and hence, $ \chi^e({\cal P})=1  $.
\end{proof}
The following statement is an straightforward conclusion of Theorem \ref{path}.
\begin{cor}
	For every $ r$-uniform loose path $ \cal P $, we have $  \chi^e({\cal P})=1 $.
	Also, let $ \cal P $ be the $ r$-uniform tight path of length $ \ell $. If $ \ell\geq 2r-1 $, then $  \chi^e({\cal P})=2 $. Otherwise $  \chi^e({\cal P})=1 $.
\end{cor}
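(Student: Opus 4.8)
The plan is to derive both statements by specializing Theorem \ref{path} to the two extreme values $t=1$ and $t=r-1$, and then to dispose of the short paths ($\ell\le 2$) that the theorem does not cover by a direct degree argument. Throughout I work in the genuine hypergraph regime $r\ge 3$, so that a loose path indeed satisfies $t=1<\tfrac{r}{2}$; this is exactly what keeps us out of the $t=\tfrac{r}{2}$ branch of the theorem (that branch forces the value $2$ and corresponds precisely to the graph path $r=2$).

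First I would treat the loose path, i.e. $t=1$. For $\ell>2$, since $1<\tfrac r2$ we land in the ``otherwise'' branch of Theorem \ref{path}, which gives $\chi^{e}(\mathcal{P})=1$ immediately. It then remains only to check $\ell\le 2$. A loose path of length $1$ is a single edge all of whose vertices have degree $1$, hence an isolated edge, which lies outside the scope of the parameter. For $\ell=2$ the two edges meet in a single vertex, so assigning weight $1$ to both edges makes $\sigma^{e}$ equal to the degree: the shared vertex receives colour $2$ and every other vertex colour $1$, so each of the two edges contains two colours and the induced colouring is proper; thus $\chi^{e}(\mathcal{P})=1$. This establishes the first claim.

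Next I would treat the tight path, i.e. $t=r-1$, so that $r-t=1$. For $r\ge 3$ we have $t=r-1>\tfrac r2$, the quantity $r-t=1$ trivially divides $r$, and $\tfrac{r}{r-t}=r$, so the threshold $2\tfrac{r}{r-t}-1$ appearing in Theorem \ref{path} is exactly $2r-1$. Feeding these values into the theorem, for $\ell>2$ we obtain $\chi^{e}(\mathcal{P})=2$ precisely when $\ell\ge 2r-1$, and $\chi^{e}(\mathcal{P})=1$ otherwise. The remaining cases $\ell\le 2$ automatically fall below the threshold $2r-1$ (as $r\ge 3$): length $1$ is again an isolated edge, and for $\ell=2$ the two edges share $r-1$ vertices, so taking all weights equal to $1$ gives the $r-1$ shared vertices colour $2$ and the two end vertices colour $1$, a proper colouring with $\chi^{e}(\mathcal{P})=1$. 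Combining the ranges yields the stated dichotomy.

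I expect no substantive obstacle: the content is essentially inherited from Theorem \ref{path}. The only points requiring care are, first, making the hypothesis $r\ge 3$ explicit so that loose paths fall under $t<\tfrac r2$ rather than the $t=\tfrac r2$ case, and second, supplying the elementary degree/weighting argument for the short paths $\ell\le 2$ that Theorem \ref{path} does not address, including the observation that a length-$1$ path is an isolated edge and hence excluded.
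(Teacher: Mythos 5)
Your proposal is correct and follows exactly the route the paper intends: the paper states this corollary as a ``straightforward conclusion'' of Theorem \ref{path}, i.e.\ precisely the specializations $t=1$ (where $1<\tfrac{r}{2}$ for $r\ge 3$ gives the ``otherwise'' branch) and $t=r-1$ (where $r-t=1$ divides $r$ and the threshold becomes $2\tfrac{r}{r-t}-1=2r-1$). Your additional handling of the cases $\ell\le 2$ and the explicit remark that $r\ge 3$ is needed to avoid the $t=\tfrac{r}{2}$ branch are points the paper leaves implicit, and they are resolved correctly.
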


\begin{theorem}\label{cycle}
	Let $\mathcal{C}=\mathcal{C}_{n,t}^{(r)} $ be an $ r$-uniform $ t$-tight cycle of order $n$ and  length $ \ell>2 $. Then,
	\begin{equation*}
	\chi^{e}(\mathcal{C})=
	\left\{
	\begin{array}{cl}
	3 & \ \ \ t=\frac{r}{2}\ and\  \ell\overset{4}{\not\equiv}0,\\
	2 & \ \ \ t=\frac{r}{2}\ and\  \ell\overset{4}{\equiv}0\  or\  t>\frac{r}{2}\ and\  \text{r-t}\  divides \  \text{r},\\
	1 & \ \ \ \text{otherwise.}
	\end{array}
	\right.
	\end{equation*}
\end{theorem}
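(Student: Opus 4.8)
The plan is to mirror the analysis of $\chi^{e}(\mathcal{P}_{n,t}^{(r)})$ in Theorem \ref{path}, replacing the endpoint arguments of the path by a cyclic reduction. Set $k=\lfloor r/(r-t)\rfloor$ and first dispose of the cases giving $\chi^{e}=1$. If $t<\frac r2$ then $r-2t>0$, so every edge $e_i=\{x_1,\dots,x_r\}$ contains a private vertex $x_{t+1}$ of degree $1$ together with an overlap vertex of degree $2$; weighting all edges $1$ makes $\sigma^{e}=\deg$ and every edge bichromatic, whence $\chi^{e}=1$. If $t>\frac r2$ but $(r-t)\nmid r$, write $r=k(r-t)+b_0$ with $0<b_0<r-t$; since $(r-t)\mid n$ the degree of a vertex depends only on its residue modulo $r-t$ and takes both values $k$ and $k+1$, and because $r>r-t$ each edge meets every residue class and so contains vertices of both degrees. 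Again $\chi^{e}=1$. This settles the ``otherwise'' line.

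For the remaining cases $(r-t)\mid r$ I introduce the key reduction. Partition $[n]$ into the $\ell$ consecutive blocks $B_j=\{(j-1)(r-t)+1,\dots,j(r-t)\}$ of size $r-t$; since $r=k(r-t)$ each edge is a union of $k$ consecutive blocks, $e_i=B_i\cup\cdots\cup B_{i+k-1}$ (indices mod $\ell$), and each vertex of $B_j$ lies in precisely the edges $e_{j-k+1},\dots,e_j$. Hence for any weighting $w$ all vertices of a block $B_j$ share the value $c_j:=\sum_{i=j-k+1}^{j}w(e_i)$, and the induced colouring is proper if and only if no window of $k$ consecutive (cyclically) blocks is monochromatic in the sequence $(c_j)_{j\in\mathbb{Z}/\ell}$. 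In particular $\mathcal{C}$ is $k$-regular, so $\chi^{e}\ge 2$ throughout what follows.

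The case $t=\frac r2$, where $k=2$, is then exact. Here $c_j=w(e_{j-1})+w(e_j)$ and the colouring is proper iff $c_j\ne c_{j+1}$ for all $j$, i.e. iff $w(e_{j-1})\ne w(e_{j+1})$ for all $j$. Thus a valid $\{1,\dots,m\}$-weighting is precisely a proper $m$-colouring of the circulant graph on $\{e_1,\dots,e_\ell\}$ joining indices differing by $2$. This graph is a single $\ell$-cycle when $\ell$ is odd and a disjoint union of two $\tfrac\ell2$-cycles when $\ell$ is even; it is $2$-colourable iff every component is an even cycle, that is iff $\ell\equiv 0\pmod 4$, and otherwise it contains an odd cycle and is $3$- but not $2$-colourable. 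Together with $\chi^{e}\ge 2$ this gives $\chi^{e}(\mathcal{C})=2$ for $\ell\equiv 0\pmod 4$ and $\chi^{e}(\mathcal{C})=3$ for $\ell\not\equiv 0\pmod 4$, matching the first two lines.

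Finally, $t>\frac r2$ with $(r-t)\mid r$ forces $k\ge 3$, and the extra room removes the parity obstruction: I claim a $\{1,2\}$-weighting always exists, so that with $\chi^{e}\ge 2$ we obtain $\chi^{e}=2$ for every $\ell$. By the reduction it suffices to produce a cyclic $w\in\{1,2\}^{\ell}$ whose block-sum sequence admits no monochromatic window of $k$ consecutive blocks; one checks this is equivalent to forbidding $k-1$ consecutive indices $m$ with $w(e_m)=w(e_{m-k})$. A near-periodic pattern assembled from alternating runs of $1$'s and $2$'s realises this, the delicate point being the cyclic seam when $2k\nmid\ell$ (a naive truncation of $1^{k}2^{k}1^{k}2^{k}\cdots$ can create a flat window at the wrap-around), which I resolve by a short case analysis on $\ell\bmod 2k$. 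I expect this construction, guaranteeing no monochromatic $k$-window all the way around the cycle including at the seam, to be the main obstacle: it is the cyclic and genuinely harder analogue of the open pattern used for the path in Theorem \ref{path}, and it is dual to the delicate lower bound (non-$2$-colourability) in the $k=2$, $\ell\not\equiv 0\pmod 4$ case.
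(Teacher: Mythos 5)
Your cases giving $\chi^{e}(\mathcal{C})=1$ and the whole case $t=\frac{r}{2}$ are correct. The block reduction (every vertex of block $B_j$ receives the colour $c_j=\sum_{i=j-k+1}^{j}w(e_i)$, and properness is exactly the absence of a monochromatic window of $k$ consecutive values $c_j$) is sound, and for $k=2$ your reformulation as a proper colouring of the circulant graph $i\sim i+2$ on the edge indices is a tidy equivalent of the paper's pattern argument ($1122\cdots$, etc.), giving the $2$-weighting when $4\mid\ell$ and, otherwise, both the impossibility of a $2$-weighting and a valid $3$-weighting in one stroke.

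The gap is the case $t>\frac{r}{2}$ with $(r-t)\mid r$, i.e. $k\geq 3$. You correctly reduce the required upper bound $\chi^{e}\leq 2$ to the existence of a cyclic word $w\in\{1,2\}^{\ell}$ with no $k-1$ consecutive indices $m$ satisfying $w(e_m)=w(e_{m-k})$, but you never exhibit such a word: you appeal to ``alternating runs of $1$'s and $2$'s'' plus an unspecified case analysis on $\ell\bmod 2k$, and you yourself flag this as the main obstacle. This is not a routine verification for your pattern family. Take $k=3$, $\ell=9$: the truncation $1^3 2^3 1^3$ is cyclically $1^6 2^3$ and fails; $1^4 2^5$ and $1^5 2^4$ fail; even the seam-only repairs $1^3 2^3 1^2 2$ and $1^3 2^3 1\, 2^2$ still contain a monochromatic $3$-window, whereas a working word such as $1^2 2^3 1^2 2^2$ requires altering run lengths away from the seam. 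So the deferred analysis would have to be redesigned, not merely written out. The paper sidesteps dense alternating patterns altogether and uses a sparse one: for $\ell\leq 2k$ it sets $w(e_1)=w(e_2)=2$ and all other weights $1$; for $\ell>2k$ it puts weight $2$ on the edges $e_1,e_k,e_{2k-1},\dots$ of a longest non-spanning sub-path of $\mathcal{C}$ whose consecutive edges meet in $r-t$ vertices (so the $2$'s are spaced $k-1$ apart, followed by a tail of $1$'s) and weight $1$ elsewhere; then every edge of $\mathcal{C}$ contains two vertices whose degrees in the weight-$2$ subhypergraph differ ($0$ and $1$, or $1$ and $2$), hence two distinct sums. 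Until you supply a construction of this kind, or actually complete your own, the value $\chi^{e}=2$ in the theorem's second line is unproved for $t>\frac{r}{2}$.
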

\begin{proof}
	Assume that $e_1,e_2,\ldots, e_{\ell}$ are the ordered edges of  $\mathcal{C}$.
	Let $ t<\frac{r}{2} $ and $ e=\{x_1,\ldots, x_r\} $  be an arbitrary edge. One can see that $ 2=\deg_{{\cal C}}(x_{1})>\deg_{{\cal C}}(x_{t+1})=1 $ and hence, $ \chi^e({{\cal C}})=~1 $.
	
	If  $ t=\frac{r}{2} $, then  $  {\cal C}$ is a $ 2$-regular hypergraph. Therefore, $ \chi^e({\cal C})>1 $.
	Let $ w:E(C)\rightarrow\{1,2\} $ be an edge weighting which induces a proper vertex coloring. Then, by the argument presented in the proof of Theorem \ref{path}, the only possible patterns to weight edges of  $\mathcal{C}$ are $ 1122\cdots $, $ 1221\cdots $, $ 2112\cdots $ and $ 2211\cdots $.
	If $ \ell\overset{4}{\equiv}0 $, then the first pattern yields a proper vertex coloring. One may readily check that no matter which pattern is applied, a monochromatic edge will be constructed when $ \ell\overset{4}{\not\equiv}0 $.
	Now, we show that $ \chi^{e}({\cal C})=3 $.
	We use the pattern $ 1122\cdots $ to weight the edges $ e_1,\ldots, e_{\ell-1} $  and put $ w(e_\ell)=3. $
	If $ \ell\overset{4}{\equiv}1 $, then we are done. Otherwise,
	it is enough to set $ w(e_{\ell-1}) = 3 $, as well.
	
	Now, suppose that  $ t>\frac{r}{2} $.
	First, let  $ k=\frac{r}{r-t} $, be an integer.
	In this case,
	$ {\cal C}$ is a $ k$-regular hypergraph.
	Thus,  $  \chi^e({\cal C})>1  $. Now, we prove that $  \chi^e({\cal C})=2  $. The assumption $ t>\frac{r}{2} $ implies that $ k\geq3 $.
	If $ \ell \leq 2k$, we define $w(e_1)=w(e_2)=2$ and $w(e_i)=1$ for every $i$, $i \neq 1,2$. Clearly, this weighting function yields a proper vertex coloring in $\mathcal{C}$.
	Otherwise, we may assume that $ \ell > 2k$.  Let $ {\cal P}^* $ be the longest non-spaning  $ (r-t)$-tight path in $ {\cal C}$ starting with edge $e_1$.
	Since
	$ r-t<\frac{r}{2} $, every $r$-consecutive vertices of  $ {\cal C}$ contains   two vertices with two different degrees $0$ and $1$  or $1$ and $2$ in $ {\cal P}^* $. Hence, the following weight function $ w $ causes a proper vertex coloring.
	\begin{equation*}
	w(e)=
	\left\{
	\begin{array}{cl}
	2 &e \in  E({\cal P}^*), \\
	1 & \text{otherwise.}
	\end{array}
	\right.
	\end{equation*}
	It is easy to see that every edge in $\mathcal{C}$, contains two vertices with different weights $k$ and $k+1$ or $k+1$ and $k+2$.
	Therefore,
	$  \chi^e({\cal C})=2  $.
	
	Finally, let $t> \frac{r}{2}$ and $ r=k(r-t)+b_0 $, where $ 0<b_0<r-t $. If  $ e_i=\{x_1,\ldots, x_r\} $  is an arbitrary edge, then $ k=\deg_{\cal C}(x_{r-b_0})<\deg_{\cal C}(x_{r})=k+1 $. Hence, $ \chi^e({\cal C})=1 $.
\end{proof}

\subsection{Theta hypergraphs}
For integers $ r,s\geq 3 $ and $t$,  let $p_1,p_2, \ldots, p_s$ be vertex disjoint $r$-uniform  $ t$-tight paths of lengths $ \ell_1, \ell_2, \ldots, \ell_s$, respectively.
The $ r$-uniform \textit{theta hypergraph}  $ \Theta_t^r(l_1,\ldots, l_s) $ is defined as  follows. First, unify all the first $ t $ vertices of $ p_i $'s and call them $ x_1,\ldots, x_{t} $. Then, identify  all the last $ t$ vertices of $ p_i $'s and nominate them $ y_1,\ldots, y_t$.  Note that, if $s=2$, then $ \Theta_t^r(l_1, l_2) $ is the $r$-uniform  $t$-tight cycle of length $ \ell_1+\ell_2$.
In the following, we classify theta hypergraphs based on the value of  parameter $ \chi^{e}$.

\begin{theorem}\label{teta}
	Let $r,s \geq 3$ and $t$ be integers and $ H_{\Theta}=\Theta_t^r(l_1,\ldots, l_s) $. Then,
	\begin{itemize}
		\item[i.] $ \chi^{e}(\mathcal{H}_{\Theta})=1$ when $t<\frac{r}{2}$.
		\item[ii.] For $t>\frac{r}{2}$, if $r-t$ divides $r$ and $\ell _i > 2(k-1)$ for some $1\leq i \leq s$, then
		$ \chi^{e}(\mathcal{H}_{\Theta})=2$. Otherwise, $ \chi^{e}(\mathcal{H}_{\Theta})=1$.
		
		\item[iii.]  If $t=\frac{r}{2}$, then
		\begin{equation*}
		\chi^{e}(\mathcal{H}_{\Theta})=
		\left\{
		\begin{array}{cl}
		3 & \ \ \  \ell_1=1\ \text{and}\  \ell_i\overset{4}{\equiv}1,\ \    2\leq i\leq s,\\
		1 & \ \ \   \ell_i=2,\ \  1\leq i\leq s,\\
		2 & \ \ \ \text{otherwise.}
		\end{array}
		\right.
		\end{equation*}
	\end{itemize}
\end{theorem}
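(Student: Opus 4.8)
The plan is to run all three regimes off one observation: under a constant weighting $w\equiv c$ we have $\sigma^{e}(v)=c\,\deg_{\mathcal{H}_{\Theta}}(v)$, so $\chi^{e}(\mathcal{H}_{\Theta})=1$ precisely when every edge already contains two vertices of distinct degree. I would first record the degree profile of a $t$-tight path: a vertex at position $p$ lies in $\lfloor (p-1)/(r-t)\rfloor+1$ edges near the start (symmetrically near the end) and in $k:=r/(r-t)$ edges deep inside when $r-t\mid r$; identifying the first (resp. last) $t$ vertices across the $s$ paths multiplies the degrees of the junction vertices $x_1,\dots,x_t$ (resp. $y_1,\dots,y_t$) by $s$. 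For part (i), $t<\frac{r}{2}$ forces $r-t>t$, so every edge, being $r$ consecutive vertices of a single path, carries at least $r-2t\ge 1$ private interior vertices of degree $1$ alongside an overlap or junction vertex of strictly larger degree; hence the all-ones weighting is already proper and $\chi^{e}=1$.

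For part (ii) let $t>\frac{r}{2}$, so $k=r/(r-t)\ge 3$. If some $\ell_i>2(k-1)$, i.e. $\ell_i\ge 2k-1$, the path $p_i$ has a fully interior edge all of whose (private) vertices have degree $k$; this edge stays monochromatic under every constant weighting, so $\chi^{e}\ge 2$. Conversely, if all $\ell_i\le 2(k-1)$ no such edge exists and the per-path degree analysis of Theorem \ref{path}, combined with the junction-degree variation, gives $\chi^{e}=1$. For the matching upper bound I would weight the edges of each path by the alternating block pattern $1^k2^k1^k\cdots$ of Theorem \ref{path}: interior edges are resolved exactly as there, while each first and last edge is resolved by its junction vertices, whose $s$-fold inflated degrees are pairwise distinct because $k\ge 3$; a direct check that these sums remain distinct under the pattern finishes $\chi^{e}=2$.

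The substance of the theorem is part (iii). When $t=\frac{r}{2}$ each path breaks into $\ell_i+1$ consecutive blocks of size $t$, with $B^{(i)}_0=X$, $B^{(i)}_{\ell_i}=Y$, and $e^{(i)}_j=B^{(i)}_{j-1}\cup B^{(i)}_j$; every vertex of an interior block lies in exactly two edges, so $\sigma^{e}$ is constant on each block and a weighting is proper iff the two blocks of every edge get different block-sums. Contracting each block to a vertex therefore yields
\[
\chi^{e}(\mathcal{H}_{\Theta})=\chi^{e}(T),
\]
where $T$ is the generalized theta graph with hubs $u\leftrightarrow X$, $v\leftrightarrow Y$ and $s$ internally disjoint $u$--$v$ paths of lengths $\ell_1,\dots,\ell_s$, under neighbor-sum-distinguishing edge weighting. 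The all-ones test gives $\chi^{e}(T)=1$ iff all $\ell_i=2$ (the only configuration with no adjacent equal degrees, using $s\ge 3$). For $\{1,2\}$-weightings, properness along each path forces $w(f_j)\ne w(f_{j+2})$, hence the period-$4$ pattern $a_i,b_i,\bar a_i,\bar b_i,\dots$ with $\bar x=3-x$ and two free choices $(a_i,b_i)$ per path; then $\sigma^{e}(u)=\sum_i a_i$ and $\sigma^{e}(v)=\sum_i(\text{last weight})$, and the last weight equals $a_i$ exactly when $\ell_i\equiv 1\pmod 4$.

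The decisive point is that if every $\ell_i\equiv 1\pmod 4$ then $\sigma^{e}(v)=\sum_i a_i=\sigma^{e}(u)$ is forced, so in the presence of a direct edge $uv$, i.e. when $\ell_1=1$ (the unique admissible length-one path), the adjacency $u\sim v$ can never be resolved with two weights, whence $\chi^{e}\ge 3$; a single weight $3$ on $uv$ restores properness and gives $\chi^{e}=3$. In every other case I would exhibit a $\{1,2\}$-weighting by choosing the bits $(a_i,b_i)$ so that each hub colour avoids its degree-two path-neighbours and, when $\ell_1=1$, selecting a path with $\ell_j\not\equiv 1\pmod 4$ to break $\sigma^{e}(u)=\sigma^{e}(v)$. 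I expect this construction to be the main obstacle: the hubs' degree-two neighbours take colours only in $\{2,3,4\}$, so one cannot merely push $\sigma^{e}(u),\sigma^{e}(v)$ past $4$ (for instance $s=3$ with all $\ell_i=3$ keeps $\sigma^{e}(u)+\sigma^{e}(v)$ constant), and the bits must be tuned by a short case analysis on $s$ and on the lengths modulo $4$, after checking throughout that the theta is well defined, in particular that $X$ and $Y$ remain disjoint, which forces $\ell_i\ge 2$ once $t>\frac{r}{2}$.
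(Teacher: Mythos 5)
Your block-contraction framing of part (iii) (reducing $\chi^{e}(\mathcal{H}_{\Theta})$ to the neighbor-sum-distinguishing index of the generalized theta graph $T$) is sound, and it is essentially the paper's mechanism phrased more cleanly; your lower bound $\chi^{e}\geq 3$ when $\ell_1=1$ and all $\ell_i\equiv 1\pmod 4$ is exactly the paper's argument. But your matching upper bound in that case is wrong: you claim ``a single weight $3$ on $uv$ restores properness.'' It cannot, for precisely the reason your lower bound works. The edge $uv$ contributes equally to $\sigma^{e}(u)$ and $\sigma^{e}(v)$, so $\sigma^{e}(u)-\sigma^{e}(v)$ is independent of $w(uv)$; if all remaining paths are weighted from $\{1,2\}$, properness at their internal vertices forces the period-$4$ patterns, and since every $\ell_i\equiv 1\pmod 4$ the last weight of each path equals its first weight $a_i$, giving $\sigma^{e}(u)=\sigma^{e}(v)$ regardless of what $w(uv)$ is. The weight $3$ must be spent breaking the pattern on one of the long paths, not on $uv$: the paper assigns $2$ to the edge of $p_1$, pattern $1122\cdots$ to each $p_i$ with $i\geq 2$, and then changes the \emph{first edge of $p_2$} to $3$, which shifts $\sigma^{e}(u)$ by $2$ relative to $\sigma^{e}(v)$ while leaving the internal vertices of $p_2$ distinguishable.

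The second gap is that the $\chi^{e}=2$ cases of part (iii) — which are the bulk of the theorem — are left as ``a short case analysis on $s$ and on the lengths modulo $4$,'' which you yourself flag as the main obstacle. This is exactly where the paper does its concrete work: when $\ell_1=1$ and some $\ell_i\not\equiv 1\pmod 4$, put $2$ on $p_1$'s edge, pattern $1122\cdots$ on paths with $\ell_i\equiv 0,1$ and $1221\cdots$ on paths with $\ell_i\equiv 2,3\pmod 4$; when all $\ell_i\geq 2$ and some $\ell_i\geq 3$, use $2112\cdots$ for $\ell_i\equiv 0$, $2211\cdots$ for $\ell_i\equiv 1,2$, and $1122\cdots$ for $\ell_i\equiv 3$, with one exceptional subcase ($s=3$, $\ell_1,\ell_2\equiv 3$, $\ell_3\equiv 1$ or $2$) repaired by switching $p_3$ to $2112\cdots$. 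Without exhibiting such assignments (and checking the hub-versus-neighbor inequalities), the theorem is not proved. Finally, in part (ii) your construction genuinely differs from the paper's: you propose the block pattern $1^k2^k\cdots$ on every path with junctions resolved by their $s$-fold inflated degrees, whereas the paper weights with $2$ a longest non-spanning $(r-t)$-tight path running through $p_1$ and each long $p_j$, and with $1$ elsewhere. Your route looks workable (the $X$-junction colors come out as $s\cdot d$, interior block colors lie in $[k,2k]$, and blocks at distance $k$ in the pattern always differ), but the ``direct check'' you defer is a genuine multi-case verification — edges meeting only one junction block, paths of length near $k-1$, and the $Y$-side where last-edge weights depend on $\ell_i\bmod 2k$ — and it is not done; nor do you address the subcase $r-t\nmid r$ of part (ii), where $\chi^{e}=1$ follows from the degree gap $k$ versus $k+1$ inside every edge.
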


\begin{proof}  Without loss of any generality, suppose that $\ell_1 \leq \ell_2 \leq \cdots \leq \ell_s$.\\
	\begin{itemize}
		\item[i.] Since every edge of $\mathcal{H}_{\Theta}$ contains two vertices with  different degrees, we deduce that $ \chi^{e}(\mathcal{H}_{\Theta})=~1$.
		
		\item[ii.] First, let $k=\frac{r}{r-t}$ be an integer. If $\ell _i \leq 2(k-1)$ for every $1\leq i \leq s$, then  every edge $e$ of $\mathcal{H}_{\Theta}$ has nonempty intersection with  set $\{x_1,\ldots, x_{t}, y_1,\ldots, y_{t}\}$. Also, $e$ contains two vertices with different degrees. Hence, $ \chi^{e}(\mathcal{H}_{\Theta})=1$.
		Now, suppose that  $\ell _j > 2(k-1)$ for some $1\leq j \leq s$. Thus, $k^{\text{th}}$ edge of $p_j$ is $k$-regular i.e. all of its vertices have degree $k$ in $\mathcal{H}_{\Theta}$. Therefore, $ \chi^{e}(\mathcal{H}_{\Theta})>1$. We show that $ \chi^{e}(\mathcal{H}_{\Theta})=2$.
		Note that, for every $i,j$,  $1\leq i\neq j\leq s$, the hypergraph $p_i \cup p_j$ is isomorphic to a $t$-tight cycle of length $\ell_i + \ell_j$ in $\mathcal{H}_{\Theta}$.
		Let  $\mathcal{H}^*=\cup_{j=2}^{s} E(\mathcal{P}^*_j)$, where $\mathcal{P}^*_j$ is the longest non-spaning $(r-t)$-tight path starting with the first edge of $p_1$, passing $p_1$ and then going through $p_j$ in cycle $p_1 \cup p_j$, whenever $\ell_j > 2(k-1)$.
		Since
		$ r-t<\frac{r}{2} $, every $r$-consecutive vertices of  cycle $ p_1 \cup p_j$ contains   two vertices with two different degrees $1$ and $2$, $1$ and $s$ or $1$ and $0$ in $\mathcal{H}^*$. Hence, the following weight function $ w $ yields a proper vertex coloring.
		\begin{equation*}
		w(e)=
		\left\{
		\begin{array}{cl}
		2 &e \in  E(\mathcal{H}^*), \\
		1 & \text{otherwise.}
		\end{array}
		\right.
		\end{equation*}
		It is easy to see that every edge in $\mathcal{H}_{\Theta}$, contains two vertices with different weights.
		Therefore,
		$\chi^e(\mathcal{H}_{\Theta})=2  $.
		
		Now, let $t>\frac{r}{2}$ and $r=k(r-t)+b_0$, where $0< b_0 <r-t$. One can readily see that every edge of $\mathcal{H}_{\Theta}$  contains two vertices of  different degrees. Therefore, $ \chi^{e}(\mathcal{H}_{\Theta})=1$.
				
		\item[iii.]  First, let $ \ell_1=1 $. If $ \ell_i\overset{4}{\equiv}1 $, for $ 2\leq i\leq s $,
		Clearly, $ \chi^{e}(\mathcal{H}_{\Theta})>1 $. To see $ \chi^{e}(\mathcal{H}_{\Theta})=3 $, suppose to the contrary that $ \chi^{e}(\mathcal{H}_{\Theta})=2 $ and $ w $ is the corresponding $ 2$-edge weighting. By the argument presented within the proof of Theorem \ref{path}, the only possible patterns to weight the edges of $ p_i $'s are $ 1122\cdots $, $ 1221\cdots $, $ 2112\cdots $ and $ 2211\cdots $.
		Since $\ell_i\overset{4}{\equiv}1 $, for every path $ p_i $, $2\leq i \leq s$, always the first edge and the last edge receive a same weight. No matter weight of the edge $ \{x_1,\ldots,x_{\frac{r}{2}}, y_1,\ldots, y_{\frac{r}{2}}\} $ of $p_1$, this edge is monochromatic which is a contradiction. Hence, $ \chi^{e}(\mathcal{H}_{\Theta})>2 $.
		Now, consider the following $ 3$-edge weighting for $ \mathcal{H}_{\Theta} $.  Apply the pattern $ 1122\cdots $, for $p_i$'s, $   2\leq i\leq s $, and weight $2$ to the  edge $ \{x_1,\ldots,x_{\frac{r}{2}}, y_1,\ldots, y_{\frac{r}{2}}\} $ of $ p_1 $. Then, change  weight of the first edge in $ p_2 $ to $ 3 $. It is easy to check that this weighting induces a proper vertex coloring for $ \mathcal{H}_{\Theta} $. Therefore, $ \chi^{e}(\mathcal{H}_{\Theta})=3$.
		
		Now, suppose that $ \ell_1=1 $ and $ \ell_i\overset{4}{\not\equiv}1$, for some $ 2\leq i\leq s $. %
		Assign $2$ to  the edge $ \{x_1,\ldots,x_{\frac{r}{2}}, y_1,\ldots, y_{\frac{r}{2}}\} $ of $ p_1 $. Also, apply the pattern $ 1122\cdots $ for $p_i$'s, when $ \ell_i\overset{4}{\equiv}0$ or $1$ and the pattern $ 1221\cdots $ for $p_i$'s,  whenever $ \ell_i\overset{4}{\equiv}2$ or $3$.  This weight assignments implies a proper vertex coloring for $ \mathcal{H}_{\Theta} $.
		
		Now, let $\ell_i \geq 2$ for all  $1\leq i\leq s$. If  $\ell_i = 2$ for all  $i$, then  every edge in $\mathcal{H}_{\Theta}$ contains two vertices with different degrees $2$ and $s$. The assumption $s \geq 3$ implies that $ \chi^{e}(\mathcal{H}_{\Theta})=1 $. Otherwise,  $ \ell_i \geq 3$ for some  $i$.  We apply the pattern  $ 2112\cdots $ for $p_i$, if $ \ell_i\overset{4}{\equiv}0$, the pattern $ 2211\cdots $, when  $ \ell_i\overset{4}{\equiv}1$ or $2$ and the pattern  $ 1122\cdots ,$  whenever $ \ell_i\overset{4}{\equiv}3$. This weighting yields a proper vertex coloring unless the case $s=3$, $ \ell_1,\ell_2\overset{4}{\equiv}3$ and $ \ell_3\overset{4}{\equiv}1$ or $2$. In this case, it is enough to change the pattern $p_3$ to $2112\cdots$.
	\end{itemize}
	\vspace*{-.5cm}
\end{proof}

\subsection{Geometric planes}
A projective plane of order $q$  is an ordered pair  $(X,\mathcal{L})$  in which  $X$ is  a set of $q^2+q+1$ points and $\mathcal{L}$  is a subset of power set of $  X $, where  $|\mathcal{L}|=q^2+q+1$ and satisfies the following conditions. Every member of $\mathcal{L}$ is a $(q+1)$-subset of $X$  called line, any point lies on $q + 1$ lines, every two points lie on a unique line and every two lines intersect in exactly one point.

An affine plane of order $q$ is an ordered pair  $(X,\mathcal{L})$  in which $X$ is  a set of   $q^2$ points  and $\mathcal{L}$  is a set of $q^2 + q$ lines, where
every line has $q$ points, any two points lie on a unique line and any point lies on $q + 1$ lines.
A parallel class in a geometric plane  is a  set of disjoint lines  partitioning  points set $X$.
It is a well known fact  that the set of lines of an affine plane of order $ q $ is partitioned into  $ q+1 $ parallel classes.

It is noteworthy that an affine plane of order $q$ exists if and only if  there is a projective plane of order $q$.
To see this, let $(X,\mathcal{L})$ be a projective plane of order
$q$. Now, consider  $(X',\mathcal{L}')$, where $X' =
X \setminus l_0$, $\mathcal{L}'= \{l \setminus {l}_0\ |\ l \in \mathcal{L}, l \neq l_0 \}$ and   $l_0$ is a fixed line in $ \mathcal{L}$. It is easy to verify that structure $(X',\mathcal{L}')$ is an affine plane of order
$q$.
Conversely, assume that $(X,\mathcal{L})$ is an affine plane of order
$q$ with parallel classes $ \Pi_1,\ldots,\Pi_{q+1} $. We construct a projective plane $(X',\mathcal{L}')$ of order $q$  as follows. Add a new line $l_0=\{\infty_{1}, \infty_{2}, \ldots, \infty_{q+1}\}$ with $ q+1 $ new points and let $X'= X \cup l_0 $. Then, set $\mathcal{L}'=l_0\cup\big(\bigcup_{l\in \Pi_i} (l\cup \{ \infty_i \})\big) $. The structure $(X',\mathcal{L}')$  is a projective plane of order $q$.

In the following theorem, we determine the parameter $\chi^e(\mathcal{H})$ when $\mathcal{H}$ is an affine plane or a projective plane.

\begin{theorem}\label{plane}
	Let  $ \cal H $ be either an affine plane or a projective plane of order $ q $. Then $ \chi^e(\mathcal{H})=2 $.
\end{theorem}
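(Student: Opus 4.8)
The plan is to separate the two standard bounds $\chi^e(\mathcal{H})\geq 2$ and $\chi^e(\mathcal{H})\leq 2$. For the lower bound I would use that both an affine and a projective plane of order $q$ are $(q+1)$-regular hypergraphs: assigning weight $1$ to every line yields $\sigma^{e}(v)=\deg_{\mathcal{H}}(v)=q+1$ for all $v$, so every line is monochromatic, and since each line has at least two points this coloring is improper. Hence $\chi^{e}(\mathcal{H})>1$ in both cases.

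For the upper bound I would follow exactly the reduction used in Theorem \ref{complete}: pick a subhypergraph $\mathcal{H}^{*}\subseteq\mathcal{H}$, give weight $2$ to the lines of $\mathcal{H}^{*}$ and weight $1$ to all others. By regularity $\sigma^{e}(v)=\deg_{\mathcal{H}}(v)+\deg_{\mathcal{H}^{*}}(v)=(q+1)+\deg_{\mathcal{H}^{*}}(v)$, so it suffices to choose $\mathcal{H}^{*}$ so that every line contains two points with different $\deg_{\mathcal{H}^{*}}$-value. The construction I propose is to take two lines $\ell_1,\ell_2$ meeting in a single point $P$ and set $\mathcal{H}^{*}=\{\ell_1,\ell_2\}$; in the affine case I would choose $\ell_1,\ell_2$ from two distinct parallel classes so that they are indeed concurrent. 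Then $\deg_{\mathcal{H}^{*}}$ takes only three values: it equals $2$ at $P$, equals $1$ on $(\ell_1\cup\ell_2)\setminus\{P\}$, and equals $0$ on every remaining point.

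To verify properness I would run through an arbitrary line $\ell$ using the incidence axioms (two lines meet in exactly one point in the projective plane; two lines either meet in one point or are parallel in the affine plane). If $\ell\in\{\ell_1,\ell_2\}$ it carries the value $2$ at $P$ together with values $1$; if $\ell$ passes through $P$ but differs from $\ell_1,\ell_2$ then $\ell$ meets each of $\ell_1,\ell_2$ only at $P$, so it carries the value $2$ at $P$ and $0$ at all its other points; and if $\ell$ avoids $P$ it meets $\ell_1,\ell_2$ in at most two distinct points, each of value $1$.

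The main obstacle is precisely this last case. When $\ell$ avoids $P$ and is concurrent with both $\ell_1$ and $\ell_2$, the two marked points of $\ell$ both have $\deg_{\mathcal{H}^{*}}=1$, so to distinguish $\ell$ I must exhibit a third point on it of value $0$. This is where the size of a line is decisive: a projective line has $q+1\geq 3$ points, leaving $q-1\geq 1$ points of value $0$, so the projective case goes through for all $q\geq 2$; an affine line has only $q$ points, leaving $q-2$ points of value $0$, so the affine argument needs $q\geq 3$. The borderline affine plane of order $2$ is $K_4$, for which a short degree-counting argument (a vertex of weighted degree $3$ and one of weighted degree $6$ would have to share a commonly weighted edge) forces $\chi^{e}=3$; thus it is a genuine exception and the affine statement is to be read for $q\geq 3$.
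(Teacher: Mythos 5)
Your proof is correct, and it takes a genuinely different---and in fact more robust---route than the paper's. The paper fixes a line $\ell_0=\{x_1,\ldots,x_q\}$ in one parallel class of the affine plane and assigns weight $2$ to the $q-1$ ``diagonal'' lines $\ell_i^i$ (the line of class $\Pi_i$ through $x_i$, $1\leq i\leq q-1$), then argues class by class that every line contains both a point crossed by some weight-$2$ line and a point crossed by none; the projective plane is then handled indirectly, by extending this weighted affine plane with a line at infinity. Your $\mathcal{H}^{*}$ consists of just two concurrent lines, and the verification is a short incidence-axiom case analysis on the position of an arbitrary line relative to the common point $P$. This buys you two things the paper's proof does not deliver. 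First, your argument covers the Fano plane: for $q=2$ the paper's construction degenerates to a single weight-$2$ line, and a single weight-$2$ line in a plane is monochromatic (each of its points lies on exactly one weight-$2$ line, so all of them receive color $q+2$), so the paper's affine argument, and hence also its projective extension, silently fails at $q=2$ even though the projective statement is true there. Second, you correctly identify that the theorem as stated is false for the affine plane of order $2$: that plane is $K_4$ viewed as a $2$-uniform hypergraph, and your counting argument (four distinct sums lying in $\{3,\ldots,6\}$ force a vertex with all incident weights $1$ adjacent to a vertex with all incident weights $2$) gives $\chi^{e}(K_4)=3$. So the affine case genuinely needs the hypothesis $q\geq 3$; the paper's own construction needs it too (it requires $q-2\geq 1$ weight-$2$ lines crossing each line of the classes $\Pi_1,\ldots,\Pi_{q-1}$), but the paper never states the restriction. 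Your version---projective for all $q\geq 2$, affine for $q\geq 3$, with $K_4$ flagged as an exception---is the statement that is actually true, and your two-line construction proves it.
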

\begin{proof}
	First assume that $ \cal H $ is an affine plane of order $ q $ with parallel classes $ \Pi_1,\ldots,\Pi_{q+1} $.
	By definition, for every line $ \ell\not\in \Pi_{q+1} $, $ |\ell\cap \ell_0|=1 $, where $ \ell_0=\{x_1,x_2,\ldots, x_q\} $ is a fixed line in $ \Pi_{q+1} $.
	Let $ \ell_i^j $ be the unique line in class $ \Pi_j $ containing $ x_i $, $ 1\leq i,j\leq q $.
	Define weighting function $ w:E({\cal H})\rightarrow\{1,2\} $  in which lines $ \ell_i^i $,  $ 1\leq i\leq q-1 $, take weight $ 2 $ and  all the remaining lines receive weight $ 1 $. 
	We claim that $ w $ induces a proper vertex coloring.  For every $ \ell\in \Pi_{q}\cup\Pi_{q+1} $, there are $ q-1 $ lines of weight $ 2 $ each of them crossing $ \ell$ in exactly one point. Therefore, there are two elements in $ \ell$, say $ z,z' $, so that
	$ \sigma^{e}(z)=q+1<q+2 \leq \sigma ^{e}(z') $.
	Now, take an arbitrary line $ \ell'\in \Pi_{i} $, $ 1\leq i\leq q-1 $. There are $ q-2 $ lines of weight $ 2 $ crossing $ \ell' $ which implies existence of two points with different colors in $ \ell' $.
	
	Let $ \mathcal {H}' $ be the projective plane of order $ q $ obtaining from $ \cal H $ by extending line set by the line $ \ell^*=\{\infty_{1}, \ldots, \infty_{q+1}\} $ and every $ \ell\in \Pi_i $, $ 1\leq i\leq q+1 $, by $ \ell\cup{\infty_{i}} $.
	Now,  we define weighting function $ w' $ for $ \cal H' $, using weighting function $ w $ as introduced above for affine plane $ \cal H $. Set $ w'(\ell^*)=1 $ and for every other line $ \ell'\in{\cal B(H')}\setminus \ell^* $, let the weight $ w'(\ell') $ be the same as its corresponding line in the affine plane. Then, $ \sigma ^{e}(\infty_{q+1})=q+1< q+2=\sigma ^{e}(\infty_{1}) $ that completes the proof.
\end{proof}

\section{Concluding Remark}

In Theorem \ref{g1}, some complete $n$-partite  graphs with parts of arbitrary sizes are considered and  the parameter $\chi^{e}$ is obtained    for them.  Theorem \ref{g2} determines $\chi^{ven}$ for complete $3$-partite graphs with parts of arbitrary sizes. Now, one may raise the following question.
\begin{problem}
	Let $ G $ be the complete $n$-partite  graph with parts of arbitrary sizes. What are the values of  $ \chi^{e}(G) $ and $ \chi^{ven}(G) $?
\end{problem}

It was proved that if $ G $ is a $ 3$-colorable graph, then $ \chi^{e}(G)\leq 3$, \cite{KLT}. Consequently, the 1-2-3-conjecture is true for bipartite graphs.  The only known bipartite graphs for which $ \chi^{e}$ reaches $3$ was an special subclass of theta graphs.
 This observation motivated  Khatirinejad et al. to  ask the following question \cite{Khatir}.
\begin{problem} \em{\cite{Khatir}}\label{khatir}
	Is it true that for every bipartite graph except an special subclass of theta graphs, we have $ \chi^{e}(G)\leq 2 $?
\end{problem}
Davoodi and Omoomi answered the problem in negative by extending  this family to bipartite generalized polygon trees as a generalization of theta graphs (Theorem 4.4. in \cite{DO}).
Note that although there are many sufficient conditions for $ \chi^{e}(G)\leq 2 $, the problem of classifying graphs with $ \chi^{e}(G)\leq 2 $ is still an open problem, even for graphs where correctness of 1-2-3-conjecture was confirmed for them.

A similar observation can be considered for hypergraphs, as well.
Clearly, for a hypergraph $ \mathcal{H} $, $ \chi^e(\mathcal{H})=1 $ if and only if every edge of $\mathcal{H}$ contains two vertices with different degrees.
A pair of vertices $ u $ and $ v $ are called \textit{twins}, whenever the set
of edges containing $ u $ is the same as the set of edges containing $ v $.
A hypergraph is called \textit{twin-free} if it contains no twins. For a graph $ G $, let $ \mathcal{H}^r(G) $ be the family of $r$-uniform  hypergraphs where every hypergraph  $\mathcal{H}$ in  $\mathcal{H}^r(G) $ is constructed  as follow. Every vertex  $v \in V( G) $ is replaced with a set $S_v$ of new vertices such that two  vertices $u$ and $v$ are adjacent in $ G $ if and only if the set $S_u \cup S_v$ is an edge of size $r$ in $\mathcal{H}$.
For example, a cycle of length $4k+2$ can be transformed  to an $r$-uniform hypergraph by setting $ |S_v|=r-1 $ for every other vertex on the cycle \cite{BDFL}.
This procedure can be generalized to every bipartite graph. It is sufficient to consider a breadth first search(BFS) tree of $G$ and then replace the vertices in odd levels with $ S_v $'s of size $p$ and   the vertices in even levels with $ S_v $'s of cardinality $r-p$. Also, it is clear that a simple way to reach $ \cal H $, for an even $ r $,  is to transform every vertex $ v $ to a set $ S_v $ of size $ \frac{r}{2} $.
Note that $|E(G)|=|E( \mathcal{H})|$ and $ \chi^{e}(G)=  \chi^{e}( \mathcal{H})$ for any $\mathcal{H}\in \mathcal{H}^r(G) $. Also, every member of $ \mathcal{H}^r(G) $ has twins.
Define
$$ \mathfrak{A}^r=\{\mathcal{H} : \mathcal{H}  \text{ is an } r\text{-uniform hypergraph and}  \chi^{e}(\mathcal{H})\leq 2 \}, $$
$$  \mathfrak{B}^r=\{\mathcal{H} : \mathcal{H}  \text{ is an } r\text{-uniform hypergraph and }  \chi^{e}(\mathcal{H})> 2 \}. $$
Bennett and Dudek proved that $  \mathfrak{B}^r \neq\emptyset $ (Theorem 1 (ii) in \cite{BDFL}). Moreover, we presented  examples of cycles and their generalizations to some theta hypergraphs $\mathcal{H}$ for  which $ \chi^e(\mathcal{H})=3 $ in Theorems \ref{cycle} and \ref{teta}.
In \cite{KKP-H} Kalkowski et al.  proposed the following conjecture.
\begin{conjecture}{\em\cite{KKP-H}}
	There is no twin-free hypergraph in  $  \mathfrak{B}^r$.
\end{conjecture}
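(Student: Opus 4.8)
The goal is to show that every twin-free $r$-uniform hypergraph $\mathcal{H}$ admits an edge weighting $w:E(\mathcal{H})\rightarrow\{1,2\}$ whose induced coloring $\sigma^{e}$ is proper, i.e. $\mathcal{H}\in\mathfrak{A}^{r}$. The plan is to separate the easy structural case from the genuinely hard analytic core. First I would dispose of all edges that already contain two vertices of distinct degree: under the constant weighting $w\equiv 1$ one has $\sigma^{e}(v)=\deg_{\mathcal{H}}(v)$, so such edges are automatically non-monochromatic. Hence if every edge has this property then $\chi^{e}(\mathcal{H})=1$ and there is nothing to prove. The whole difficulty is concentrated in the \emph{degree-monochromatic} edges --- those whose vertices all share a common degree --- which must be broken by deploying weight-$2$ edges to create an imbalance inside them.

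To attack the remaining case I would pass to the shifted variables $x(f)\in\{0,1\}$ with $w(f)=1+x(f)$, so that $\sigma^{e}(v)=\deg_{\mathcal{H}}(v)+d_{2}(v)$, where $d_{2}(v)=\sum_{f\ni v}x(f)$ counts the weight-$2$ edges at $v$. For two vertices $u,v$ lying in a common edge, the difference $\sigma^{e}(u)-\sigma^{e}(v)$ depends on $x$ only through the edges in the symmetric difference $S_{u}\triangle S_{v}$ of their edge-sets; twin-freeness is exactly the statement that $S_{u}\neq S_{v}$, so $S_{u}\triangle S_{v}\neq\emptyset$ for every such pair, which guarantees that each individual pair \emph{can} be distinguished by a suitable choice of $x$. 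The strategy is then to find a single global $x$ that simultaneously breaks every degree-monochromatic edge. The most promising route is the probabilistic method: choose each $x(f)$ independently and uniformly, let $A_{e}$ be the event that edge $e$ is monochromatic, and apply the Lov\'{a}sz Local Lemma. For a degree-monochromatic $e=\{v_{1},\dots,v_{r}\}$ with common degree $d$ the event $A_{e}$ forces the equalities $d_{2}(v_{1})=\cdots=d_{2}(v_{r})$, and when the edge-sets $S_{v_{i}}$ are not too heavily overlapping a local central limit estimate gives $\Pr[A_{e}]=O(d^{-(r-1)/2})$, small enough to beat the bounded dependency among the events $A_{e}$.

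The step I expect to be the genuine obstacle is precisely the regime this estimate fails to control: \emph{near-twin} configurations, in which two vertices of a degree-monochromatic edge share all but a few edges. There $S_{u}\triangle S_{v}$ is nonempty (so twin-freeness holds) yet tiny, the variables $d_{2}(u)$ and $d_{2}(v)$ are almost perfectly correlated, and $\Pr[A_{e}]$ stays bounded away from $0$ no matter how large the degrees are. Such configurations are exactly the hypergraphs that populate $\mathfrak{B}^{r}$ once exact twins are admitted --- for instance the blow-ups $\mathcal{H}\in\mathcal{H}^{r}(G)$ of a cycle of length $4k+2$ with $|S_{v}|=\tfrac{r}{2}$, for which $\chi^{e}(\mathcal{H})=\chi^{e}(C_{4k+2})=3$ --- and the content of the conjecture is that deleting the twins also destroys every subtler obstruction of this kind. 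To finish I would therefore need a structural dichotomy: either the auxiliary graph of near-twin pairs (pairs with small symmetric difference) is sparse enough that the local lemma still closes, or, where it is dense, a deterministic correction that reroutes weight-$2$ edges through the few private edges of each near-twin pair so as to break the corresponding hyperedges without reintroducing monochromaticity elsewhere. Controlling the interaction of the probabilistic bulk with this deterministic treatment of near-twins, and thereby ruling out a pathological rigid twin-free hypergraph on which every weighting leaves some edge monochromatic, is the heart of the problem and the reason the statement remains a conjecture.
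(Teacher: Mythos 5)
The first thing to say is that the statement you were asked about is not a theorem of the paper at all: it is an open conjecture (due to Kalkowski, Karo\'nski and Pfender, cited as such), and the paper offers no proof --- it only supplies supporting evidence, namely that all hypergraphs currently known to lie in $\mathfrak{B}^r$ arise as blow-ups $\mathcal{H}^r(G)$ of graphs $G$ with $\chi^{e}(G)=3$ and hence contain twins. Your proposal, to its credit, is honest about this: it is a research programme, not a proof, and you yourself flag the near-twin regime as unresolved. But since the task is to certify a proof, the verdict is that there is a genuine gap, and it sits exactly where you placed it. Concretely, two steps are unsubstantiated. First, even in the ``bulk'' regime your Local Lemma step is not verified: the anti-concentration bound $\Pr[A_e]=O(d^{-(r-1)/2})$ requires the sets $S_{v_i}\triangle S_{v_j}$ to be large, while the dependency degree of $A_e$ can be polynomial in the maximum degree, so the condition $e\,p\,(D+1)\leq 1$ does not follow from anything you wrote; moreover, for bounded-degree twin-free hypergraphs both $p$ and $D$ are constants and the lemma gives nothing, yet these are precisely the hypergraphs (tight cycles, theta hypergraphs) that the paper shows can have $\chi^{e}=3$ when twins are present. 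Second, the ``deterministic correction'' for dense clusters of near-twin pairs is named but never constructed, and no argument is given that rerouting weight-$2$ edges through the few private edges of one pair does not create monochromatic edges elsewhere; this global interaction is the entire content of the conjecture.

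There is also a structural reason to believe no argument of the shape you sketch can close without a substantially new idea: your outline never uses $r\geq 3$. For $r=2$ the analogous statement is false --- $K_3$ and $C_6$ are twin-free and satisfy $\chi^{e}=3$ --- and both are exactly ``near-twin'' configurations in your sense: adjacent vertices of $C_6$ have $|S_u\triangle S_v|=2$, so they land in the dense near-twin regime that your dichotomy defers to the unspecified deterministic step. Any correct proof must therefore exploit uniformity $r\geq 3$ in an essential way (for instance, that an edge contains $\binom{r}{2}\geq 3$ pairs, only one of which needs to be split), and your proposal contains no mechanism that distinguishes $r\geq 3$ from $r=2$. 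So the gap is not a technical loose end but the heart of the problem, as you say yourself; the statement remains a conjecture both in the paper and after your attempt.
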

It  can be seen that all known $r$-uniform hypergraphs in $\mathfrak{B}^r$ (obtained in  \cite{BDFL} and this paper),  belong to $\mathcal{H}^r(G) $ for some graph $G$ with $ \chi^{e}(G)=3$.
By Theorem \ref{thm:almost}, for an $ r $-uniform random hypergraph   $ \mathcal{H}= \mathcal{H}^{r}_{n,p}  $, asymptotically almost surely,  $ \chi^{e}(\mathcal{H})=1 $, if $ r\geq 4  $ and $ H\in  \mathfrak{A}^r $, whenever $ r=3 $.   Hence, motivated by Conjecture \ref{conj:3-HYPER} and Theorem \ref{thm:almost}, classifying  hypergraphs in $ \mathfrak{B}^r$ is another interesting problem. We conclude this section by proposing the following question.
\begin{problem}
	Is there any $ r$-uniform hypergraph $ {\cal H} \in {\mathfrak{B}}^r $, such that $ \mathcal{H}\notin  \mathcal{H}^r(G)$ for some graph $G$?
\end{problem}

\end{document}